\documentclass{amsart}
\usepackage{amssymb}
\usepackage{amsmath}

\usepackage{a4}
\usepackage{color}

\begin{document}
\newtheorem{theorem}{Theorem}[section]
\newtheorem{lemma}[theorem]{Lemma}
\newtheorem{proposition}[theorem]{Proposition}
\newtheorem{remark}[theorem]{Remark}
\newtheorem{definition}[theorem]{Definition}
\newtheorem{corollary}[theorem]{Corollary}
\newtheorem{example}[theorem]{Example}
\def\qedbox{\hbox{$\rlap{$\sqcap$}\sqcup$}}
\makeatletter
  \renewcommand{\theequation}{%
   \thesection.\alph{equation}}
  \@addtoreset{equation}{section}
\makeatother
\title[Structure groups of model spaces]
{On the structure group of a decomposable model space}
\author{Corey Dunn*}
\begin{address}{CD: Mathematics Department, California State University at San Bernardino,
San Bernardino, CA 92407, USA. Email: \it
cmdunn@csusb.edu.}\end{address}
\author{Cole Franks}
\begin{address}{CF: Mathematics Department, University of South Carolina,
Columbia, SC 29225, USA. Email: \it
franksw@email.sc.edu.}\end{address}
\author{Joseph Palmer}
\begin{address}{JP:  Mathematics Department, Washington University in St. Louis, St. Louis, MO 63130, USA. Email: \it
jpalmer@math.wustl.edu.}\end{address}

\begin{abstract}

We study the structure group of a canonical algebraic curvature tensor built from a symmetric bilinear form, and show that in most cases it coincides with the isometry group of the symmetric form from which it is built.  Our main result is that  the structure group of the direct sum of such canonical algebraic curvature tensors on a decomposable model space must permute the subspaces $V_i$ on which they are defined.  For such an algebraic curvature tensor, we show that if the vector space $V$ is a direct sum of subspaces $V_1$ and $V_2$, the corresponding structure group decomposes as well if $V_1$ and $V_2$ are invariant of the action of the structure group on $V$.  We determine the freedom one has in permuting these subspaces, and  show these subspaces are invariant if $\dim V_1 \neq \dim V_2$ or if the corresponding symmetric forms defined on those subspaces have different (but not reversed) signatures, so that in this situation, only the trivial permutation is allowable.  We  exhibit a model space that realizes the full permutation group, and, with exception to the balanced signature case,  show the corresponding structure group is isomorphic to the wreath product of the structure group of a given symmetric bilinear form by the symmetric group.  Using these results, we conclude that the structure group of any member of this family is isomorphic to a direct product of wreath products of pseudo-orthogonal groups by certain subgroups of the symmetric group.  Finally, we apply our results to two families of manifolds to generate new isometry invariants that are not of Weyl type.

\end{abstract}

\keywords{decomposable model space, structure group, canonical algebraic curvature tensor, curvature homogeneous. \newline
*corresponding author \newline
2010 {\it Mathematics Subject Classification.} Primary: 15A86, Secondary: 15A63, 15A21, 53B30} \maketitle

\section{Introduction} 

Let $V$ be a real vector space of finite dimension $N$, let $V^* :={\rm Hom}(V, \mathbb{R})$ be its dual.  An object $R \in \otimes^4V^*$ is called an \emph{algebraic curvature tensor} if it satisfies the following three properties, the last of which is known as the \emph{Bianchi identity}:
\begin{equation} \label{R}
 \begin{array}{r c l}
R(x, y, z, w)& = & - R(y, x, z, w), \\
R(x, y, z, w) & = & R(z, w, x, y), {\rm and } \\
0 & = & R(x, y, z, w) + R(x, z, w, y) \\
 & &\quad\qquad+ R(x, w, y, z)\,.
\end{array}
\end{equation}
     
If $(M, g)$ is a pseudo-Riemannian manifold, then one may use the Levi-Civita connection $\nabla$ to compute the Riemann curvature tensor $R^\nabla \in \otimes^4 T^*M$, and the evaluation of this tensor at a point $P\in M$ produces the algebraic curvature tensor $R^\nabla_P \in \otimes^4 T^*_PM$, where $T^*M$ is the cotangent bundle of $M$, and $T^*_PM$ is the cotangent space of $M$ at $P$.  It is a classical differential geometric fact that every algebraic curvature tensor $R$ can be realized as the curvature tensor of a pseudo-Riemannian manifold at a point \cite{G01}.   Thus it can be said that these algebraic curvature tensors are an algebraic portrait  of the curvature of a manifold at a point, and an understanding of these algebraic objects often translates into a subsequent understanding of the geometrical object they represent.   For example, an understanding of the Osserman conjecture in the higher signature setting is concerned with an algebraic understanding of the Jordan normal form of the Jacobi operator \cite{G01}, Stanilov-Tsankov theory \cite{BFG07} is concerned with the commutativity of certain other natural operators associated to the Riemann curvature tensor, and other authors have studied certain algebraic questions concerning these algebraic curvature tensors simply because these questions are of interest in their own right.  Several examples of this include work on the algebraic properties of the Jacobi operator on complex model spaces \cite{G07},  the study of the linear independence of certain sets of algebraic curvature tensors \cite{DD10}, and results aimed at improving the efficiency with which one may express a given algebraic curvature tensor \cite{DD10, DG04, K91}.

Let $\alpha_1, \ldots, \alpha_n$ be a collection of contravariant tensors on $V$.  We call the tuple $\mathfrak{M}:=(V, \alpha_1, \ldots, \alpha_n)$ a \emph{model space}.  For example, if $\varphi$ is a symmetric bilinear form, and $R$ is an algebraic curvature tensor, $(V, \varphi, R)$ is a model space.  There are some places in the literature where it has been convenient to distinguish certain types of model spaces from others.  For example, in \cite{D09, G07} the pair $(V, R)$ is referred to as a \emph{weak model space}, although in the current work it is not necessary to make this distinction.

There is a natural action of the general linear group $Gl(V)$ on any contravariant tensor $\alpha \in \otimes^sV^*$.  Namely, if $A \in Gl(V)$, we may define $$(A^* \alpha)(x_1, \ldots, x_s)= \alpha(Ax_1, \ldots, Ax_k).$$  This differs slightly from the standard representation theoretic approach, since one would normally need to define $A^*\alpha$ by first precomposing with $A^{-1}$ to ensure that $\rho(A)(\alpha) = A^*\alpha$ defines a representation (i.e., that $\rho: Gl(V) \to {\rm End}(\otimes^s V^*)$ is a homomorphism).  We define the \emph{structure group} $G_{\mathfrak{M}}$ of a model space $\mathfrak{M} = (V, \alpha_1, \ldots, \alpha_n)$ as
$$
G_{\mathfrak{M}} = \{A \in Gl(V) | A^* \alpha_i = \alpha_i {\rm\ for\ } i = 1, \ldots, n\}.
$$
In the event that $n = 1$ so that the model space $\mathfrak{M} = (V, \alpha)$, then we sometimes write $G_{\mathfrak{M}} = G_{\alpha}$ for simplicity when there is no confusion as to what is meant.  In addition, we may also refer to $G_\alpha$ as the structure group of $\alpha$ for simplicity, rather than as the structure group of the model space $(V, \alpha)$.

Structure groups arise under different names in situations that are familiar to mathematicians.  For example, if $\varphi$ is a positive-definite inner product, then $G_{\varphi} = O(N)$, the familiar orthogonal group.  If one notes that $Gl(V)$ is the structure group of the trivial model space consisting solely of $V$, then many important quantities are dependent upon the observation that the quantity they compute be ``independent of the particular basis chosen,''  the determinant and trace of a linear operator, for example.  

There are many nontrivial examples where an understanding of a model space's structure group gives rise to more significant and useful information.  A pseudo-Riemannian manifold $(M,g)$ is called \emph{curvature homogeneous}  if there is a model space $\mathfrak{M}:= (V, \varphi, R)$ and for all $P \in M$ there exists a linear isometry $\phi_P: V \to T_PM$ with $\phi_P^*R_P^\nabla = R$, where $\varphi$ is an inner product with the same signature as $g$, and $R$ is an algebraic curvature tensor.  Since the models $\mathfrak{M}_P := (T_PM, g|_P, R|_P)$ are all isomorphic to $\mathfrak{M}$, the \emph{structure group} of a curvature homogeneous manifold is the structure group of any of the models $\mathfrak{M}_P$.  It is common to search for a non-constant isometry invariant to determine when a curvature homogeneous manifold is not locally homogeneous, one of the aims in the broad study of curvature homogeneity.  See, for example \cite{KTV92} in the Riemannian setting, and \cite{D00, DG05} in the higher signature setting.  The Weyl scalar invariants usually provide such an invariant, and in the Riemannian setting, if they do not, then the manifold is locally homogeneous \cite{PTV96}.  It is the case in the higher signature setting that all of these scalar invariants could vanish (Walker metrics are such a family \cite{BGGNV09, GS05}); in this case one needs to look further.  It is therefore a common practice to compute the structure group $G_{\mathfrak{M}}$ of the manifold in question, and produce a quantity (using a geometric quantity other than just $R$, for example, the covariant derivatives of $R$)  that is invariant under the action of this group.  Such a quantity $\alpha: \tilde{\mathfrak{M}} \to K$, where $K$ is an auxiliary space and $\tilde{\mathfrak{M}} := (V, \varphi, R, A_1, \ldots, A_k)$,  is called an \emph{invariant} of $\tilde{\mathfrak{M}}$, we give a formal definition below (see also page 26 of \cite{G07} for more details).   Using the structure group of a model space to construct invariants is common, some representative examples are \cite{DG05, DGS04, GS04}, and most recently in \cite{D09}.   Thus, an understanding of $G_{\mathfrak{M}}$ is useful in its own right, and crucial to the study of curvature homogeneity in the higher signature setting.

\begin{definition}  \label{invariant}
Let $\mathfrak{M} = (V, \alpha_1, \ldots, \alpha_n)$ and $\mathfrak{M}' = (V, \beta_1, \ldots, \beta_m)$ be model spaces with $\{\beta_1, \ldots, \beta_m\} \subseteq \{\alpha_1, \ldots, \alpha_n\}$.  The structure group $G_{\mathfrak{M}'}$ acts on the $\alpha_i$ by precomposition.  For some auxiliary space $K$, a function $\alpha: \mathfrak{M} \to K$ is an \emph{invariant} (or \emph{model space invariant}) if $\alpha$ is invariant under the action of $G_{\mathfrak{M}'}$ on $\mathfrak{M}$.
\end{definition}

\begin{remark}  \label{remark1}
{\rm Definition \ref{invariant} may seem a bit obscure, although we remind the reader that this is the natural setting for the invariants one typically encounters.  For example, if $\mathfrak{M} = (V, \varphi, R)$, where $\varphi \in S^2(V)$ is nondegenerate and $R\in \mathcal{A}(V)$, then one can consider an orthonormal basis $\{e_1, \ldots, e_N\}$ and define the \emph{Ricci tensor} $\rho$ and \emph{scalar curvature} $\tau$ as
$$
\begin{array} {r c l}
\rho(x,y) & = & \sum \varphi(e_i, e_i)R(x, e_i, e_i, y), {\rm\ and} \\
\tau & = & \sum \varphi(e_j, e_j)\rho(e_j, e_j) = \sum \varphi(e_i, e_i) \varphi(e_j, e_j) R(e_i, e_j, e_j, e_i).
\end{array}
$$
That $\rho$ and $\tau$ are independent of the orthonormal basis chosen is precisely the condition that $\rho$ and $\tau$ are invariant under the action of the structure group $G_{\mathfrak{M}'}$, where $\mathfrak{M}' = (V, \varphi)$.  In addition, if $\omega$ is any Weyl scalar invariant (i.e., an invariant of $(V, \varphi, R, \nabla R, \ldots, \nabla^n R)$ that arises from using the metric $\varphi$  to fully contract all indices using combinations of $R, \nabla R, \ldots, \nabla^n R$, see page 15 of \cite{G07}), then one may use an orthonormal basis to express $\omega$ in the components of the tensors involved, and then show that the expression is independent of the orthonormal basis chosen.  Any of these Weyl scalar invariants are model space invariants as in Definition \ref{invariant}, where again $\mathfrak{M}' = (V, \varphi)$.  
}\end{remark}

\begin{remark}
{\rm Although the Weyl scalar invariants are a large class of model space invariants, the auxiliary space $K$ need not always be the real numbers.  For example, $\rho$ is an $S^2(V)$-valued invariant, and there are several invariants in \cite{D09} which are valued in symmetric projective space.  This justifies the added generalization of allowing these model space invariants to be $K$-valued in Definition \ref{invariant}, rather than only real valued.
}\end{remark}

\begin{remark}
{\rm  The main use for these model space invariants in this paper is to construct isometry invariants in the context of pseudo-Riemannian manifolds.  See the discussion in the second paragraph of Section \ref{section5}.
}\end{remark}

It is the goal of this paper to compute the structure group of a large and useful family of model spaces, and to study the effect of the decomposition of certain model spaces on the corresponding structure group.  In addition, we illustrate the application of these results in two geometrical situations, and use our structure group results to construct useful model space invariants.  We make these goals more precise and formally state our results.

Let $S^2(V)$ be the space of symmetric bilinear forms on $V$, and let $\mathcal{A}(V)$ be the vector space of algebraic curvature tensors.  We define 
$$
R_\varphi(x, y, z, w) = \varphi(x, w)\varphi(y,z) - \varphi(x, z)\varphi(y,w).
$$
It is well known \cite{G01} that $R_\varphi \in \mathcal{A}(V)$, and that $\mathcal{A}(V) = {\rm Span}\{R_\varphi | \varphi \in S^2(V)\}.$  The $R_\varphi$ have geometrical significance as well: if $(M,g)$ is isometrically embedded in pseudo-Euclidean space $\mathbb{R}^{\dim(M) + \kappa}$, then the curvature tensor of $M$ is of the form $\sum_{i = 1}^\kappa \pm R_{\varphi_i}$ (see, for example \cite{DG04}).  For these reasons (in particular, when $\kappa = 1$), the tensor $R_\varphi$ is sometimes called a \emph{canonical} algebraic curvature tensor.

There is one final preliminary notion to introduce before we introduce our main results and begin our study.  Let $\mathfrak{M} = (V, \alpha_1, \ldots, \alpha_n)$ be a model space. Suppose $V = V_1 \oplus V_2$, where $\dim(V_i) \geq 1$.  If  $\alpha \in \otimes^tV^*$, then we write $V_1 \perp_\alpha V_2$ if for $x_i \in V_i$, we have $\alpha(x_1, x_2, v_1, \ldots, v_{t-2}) = \alpha(x_2, x_1, v_1, \ldots, v_{t-2}) = 0$ for all $v_1, \ldots, v_{t-2} \in V$.  We say that $\mathfrak{M}$ is \emph{decomposable} if there exist subspaces $V_1$ and $V_2$ with $V = V_1 \oplus V_2$, $\dim(V_i) \geq 1$, and $V_1 \perp_{\alpha_s} V_2$ for every $s = 1, \ldots, n$.  In this event, we write $\alpha_s = \alpha^1_s \oplus \alpha^2_s$, where $\alpha^i_s$ is the restriction of $\alpha_s$ to $V_i$, and $\mathfrak{M} = \mathfrak{M}_1 \oplus \mathfrak{M}_2,$ where $\mathfrak{M}_i = (V_i, \alpha^i_1, \ldots, \alpha^i_n)$.  We say that $\mathfrak{M}$ is \emph{indecomposable} if it is not decomposable.  For example $R \in \mathcal{A}(V)$, and 

\begin{equation}  \label{kerR}
\ker (R) := \{ v\in V | R(v, x, y, z) = 0 {\rm\ for\ all\ } x, y, z \in V\} \neq 0,
\end{equation} 
then there is the decomposition $(V, R) \cong (\bar V, \bar R) \oplus (\ker(R), 0)$, where $\pi: V \to V / \ker (R) = \bar V$, and $\bar R$ is characterized by $\pi^* \bar R = R$.  It follows that $\ker(\bar R) = 0$.  We will investigate this situation in Section \ref{section2}.  We remark that our definition of indecomposable above is biased toward the first two slots of the tensor, although in the case of algebraic curvature tensors and symmetric bilinear forms there is no bias.  In addition, it seems that the definition of $\ker(R)$ above is also biased toward the first slot, and we shall see that this is not the case: one would define the same object by specifying any of the other slots as well.  See Propositions \ref{Rbias} and \ref{kerbias} in Section \ref{section2} for a verification of these facts.  

The concept of indecomposability is also a familiar one:   consider again the  situation that $\varphi \in S^2(V)$ is positive definite.  One usually refers to a decomposition $(V, \varphi) = (V_1, \varphi_1) \oplus (V_2, \varphi_2)$ simply as $V_1 \perp V_2$.  Notice that in the associated structure group $G_\varphi = O(N)$, the subspaces $V_1$ and $V_2$ are never $A-$invariant for all $A \in G_{\varphi}$.  Also note that $O(N)$ does not decompose as the group theoretic internal direct product $O(\dim(V_1)) \times O(\dim(V_2))$.  In fact, since orthonormal bases exist on inner product spaces, one always has the complete decomposition $(V, \varphi) = \oplus_{i = 1}^N (V_i, \varphi_i)$, where $\dim(V_i) = 1$.  Our main result, Theorem \ref{multiphi}, offers a very different picture of this situation using the model space $\oplus_{i = 1}^k(V_i, R_{\varphi_i})$, where $\varphi_i \in S^2(V)$.  See Theorem \ref{multiphi} and Section \ref{section4} for a description of the effect of this decomposition on its corresponding structure group.

We now give an outline of this paper and list our main results.    Section \ref{section2} is a short survey of prerequisite information, and gives a general characterization of $G_R$ in the event $\ker(R) \neq \{0\}$, when $R \in \mathcal{A}(V)$.  We then give a computation of the structure group $G_{R_\varphi}$ when $\varphi$ is nondegenerate in Theorem \ref{Gphi}, completing the characterization of $G_{R_{\varphi}}$ in terms of $G_{\varphi}$.  Assertions (3) and (4) of Theorem \ref{Gphi} are not new results, since Assertion (3) is observational (although see \cite{DGS04} for a nontrivial application), while Assertion (4) is obvious upon consideration of the curvature identities in Equation (\ref{R});  we include them here for completeness.

\begin{theorem}  \label{Gphi}
Suppose $\varphi \in S^2(V)$ is nondegenerate and has signature $(p,q)$.
\begin{enumerate}
\item  If ${\rm Rank}(\varphi) \geq 3$, then $G_{R_\varphi} = \{A \in Gl(V) | A^* \varphi = \pm \varphi\}$.
\item  If ${\rm Rank} (\varphi) \geq 3$, and $p \neq q$, then $G_{R_{\varphi}} = G_{\varphi}$.
\item  If ${\rm Rank}(\varphi) = 2$, then $G_{R_\varphi} = \{A \in Gl(V) | \det(A) = \pm 1\}$.
\item  If ${\rm Rank}(\varphi) \leq 1$, then $R_{\varphi} = 0$, so $G_{R_{\varphi}} = Gl(V)$.
\end{enumerate}
\end{theorem}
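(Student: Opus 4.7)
The plan is to reduce all four assertions to a rigidity statement about canonical curvature tensors and then verify that statement by a direct computation in an orthonormal basis. A routine expansion shows $A^* R_\varphi = R_{A^* \varphi}$ for every $A \in Gl(V)$, so $A \in G_{R_\varphi}$ if and only if $R_{A^* \varphi} = R_\varphi$. Consequently, assertions (1) and (2) reduce to the following rigidity lemma: if $\varphi \in S^2(V)$ is nondegenerate of rank at least $3$ and $\psi \in S^2(V)$ satisfies $R_\psi = R_\varphi$, then $\psi = \pm \varphi$. The converse direction is immediate since $R_\varphi$ is quadratic in $\varphi$, so $R_{\pm \varphi} = R_\varphi$.

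To prove the rigidity lemma, I would fix a $\varphi$-orthonormal basis $e_1, \ldots, e_N$ with $\varphi(e_i, e_i) = \epsilon_i \in \{\pm 1\}$ and write $\psi_{ij} := \psi(e_i, e_j)$. Equating $R_\psi = R_\varphi$ on basis four-tuples gives three families of polynomial relations on the $\psi_{ij}$:
\begin{equation*}
\psi_{ii}\psi_{jj} - \psi_{ij}^2 = \epsilon_i \epsilon_j,\quad \psi_{ii}\psi_{jk} = \psi_{ij}\psi_{ik},\quad \psi_{il}\psi_{jk} = \psi_{ik}\psi_{jl},
\end{equation*}
where the indices within each relation are assumed distinct. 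The goal is to use these to show first that $\psi$ is diagonal in this basis and then that its diagonal entries equal $\lambda \epsilon_i$ for a common sign $\lambda = \pm 1$. Symmetrizing the second relation by swapping $i$ and $j$, multiplying the two versions, and substituting the first relation yields $\psi_{ik} \psi_{jk} = 0$ for distinct $i, j, k$, so each off-diagonal row and column of the matrix $(\psi_{ij})$ has at most one nonzero entry. Then, assuming $\psi_{ab} \neq 0$ with $a \neq b$ and choosing a third index $c \notin \{a, b\}$ (which exists because $N \geq 3$), the second family forces $\psi_{cc} \psi_{ab} = \psi_{ca}\psi_{cb} = 0$ and hence $\psi_{cc} = 0$, so that the first family applied to the pair $(a, c)$ reads $0 = \pm 1$, a contradiction. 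With $\psi$ diagonal, dividing two relations $\psi_{ii}\psi_{jj} = \epsilon_i \epsilon_j$ that share a common index gives $\psi_{ii}^2 = 1$ and the remaining relations force a common sign, yielding $\psi = \pm \varphi$.

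Assertion (2) follows at once, since $A^*\varphi$ has the same signature $(p, q)$ as $\varphi$ by Sylvester's law of inertia while $-\varphi$ has signature $(q, p)$, so $A^*\varphi = -\varphi$ is incompatible with $p \neq q$. For assertion (3), the space $\mathcal{A}(V)$ is one-dimensional when $\dim V = 2$, so $A^* R_\varphi = \lambda(A) R_\varphi$ for some scalar; evaluating on a basis gives $\lambda(A) = \det(A)^2$. Assertion (4) is immediate since $R_\varphi$ is a $2 \times 2$ determinant in the $\varphi$-values and vanishes whenever $\dim V \leq 1$. The main obstacle I expect is the combinatorial bookkeeping that $\psi$ must be diagonal; the critical use of the hypothesis is the existence of the third index $c$, which is exactly where the rank $\geq 3$ condition enters assertion (1).
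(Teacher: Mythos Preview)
Your argument is correct and follows the same overall strategy as the paper: reduce to the identity $A^*R_\varphi = R_{A^*\varphi}$, invoke a rigidity statement $R_\psi = R_\varphi \Rightarrow \psi = \pm\varphi$ in rank $\geq 3$, handle the signature obstruction for (2), and dispatch (3) and (4) by the determinant computation and the triviality of $\mathcal{A}(V)$ in low dimension. The one substantive difference is that the paper outsources the rigidity statement to Lemma~1.6.3 of \cite{G07}, whereas you supply a direct, self-contained proof by equating components of $R_\psi$ and $R_\varphi$ in a $\varphi$-orthonormal basis. Your combinatorial reduction (first showing $\psi_{ik}\psi_{jk}=0$, then using a third index $c$ to force $\psi_{cc}=0$ and derive a contradiction) is clean and makes the role of the rank~$\geq 3$ hypothesis explicit. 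One small point of presentation: when you say the first family on $(a,c)$ ``reads $0=\pm 1$,'' you are silently using not just $\psi_{cc}=0$ but also $\psi_{ac}=0$, which follows from your earlier observation that each off-diagonal column has at most one nonzero entry (since $\psi_{ab}\neq 0$ occupies that slot in column $a$); it would help the reader to state this.
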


Section \ref{section3} is an investigation of $G_R$, when $R = \oplus_{i = 1}^k R_{\varphi_i}$.  Such an $R$ has geometric significance as the curvature tensor of a certain hypersurface embedding, or as a skew-Tsankov curvature tensor on a Riemannian model space \cite{BFG07}; we consider this sort of tensor in the geometric setting in Section \ref{section5}.  In Section \ref{section3}, we lay out the proof of our main result Theorem \ref{multiphi} in the technical Lemma \ref{technical}; with exception to Corollary \ref{directproduct}, the conclusion of Theorem \ref{multiphi} will be the foundation of our subsequent results in this paper.  Denote the group of permutations of $\{1, \ldots, k\}$ as $S_k$.

\begin{theorem}  \label{multiphi}
Let $\varphi_i \in S^2(V)$.  Suppose $(V, R) = \oplus_{i = 1}^k (V_i, R_{\varphi_i})$ is a model space with $\ker R = 0$, and let $A \in G_R$.  Then there exists  $\sigma\in S_k$ so that $A:V_i \to V_{\sigma(i)}$.  
\end{theorem}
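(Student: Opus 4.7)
The plan is to detect the decomposition via the $R$-kernel function $v \mapsto \ker_R(v) := \{w \in V : R(v, w, y, z) = 0 \text{ for all } y, z \in V\}$ and then peel off the summands of smallest dimension by induction on $k$.  First I would observe that $\ker R = 0$ forces each $\varphi_i$ to be nondegenerate on $V_i$ and forces $\dim V_i \ge 2$ for every $i$ (a one-dimensional $V_i$ would make $R_{\varphi_i}$ vanish on $V_i$ by antisymmetry, placing a nonzero vector into $\ker R$).  Writing $v = \sum_i v_i$ with $v_i \in V_i$ and using $V_i \perp_R V_j$ for $i \neq j$, a direct computation yields
$$
\ker_R(v) \;=\; \bigoplus_{i \in \operatorname{supp}(v)} \mathbb{R} v_i \;\oplus\; \bigoplus_{i \notin \operatorname{supp}(v)} V_i,
$$
the key input being that whenever $v_i \neq 0$, $R_{\varphi_i}(v_i, w_i, y_i, z_i) = 0$ for all $y_i, z_i \in V_i$ forces $w_i \in \mathbb{R} v_i$, by nondegeneracy of $\varphi_i$.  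Set $n(v) := \dim V - \dim \ker_R(v) = \sum_{i \in \operatorname{supp}(v)}(\dim V_i - 1)$; this is $G_R$-invariant because $A \ker_R(v) = \ker_R(Av)$ whenever $A \in G_R$.

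Next, let $d = \min_i \dim V_i$ and $I := \{i : \dim V_i = d\}$.  Because $\dim V_j - 1 \geq d - 1 \geq 1$ and any $v$ with $|\operatorname{supp}(v)| \geq 2$ satisfies $n(v) \geq 2(d-1) > d - 1$, the set of nonzero $v$ with $n(v) = d-1$ is exactly $\bigcup_{i \in I}(V_i \setminus \{0\})$, and this is preserved by $A$.  To see that $A$ permutes the $V_i$ with $i \in I$, fix $i \in I$ and take nonzero $v, w \in V_i$.  Then $Av \in V_{j(v)}$ and $Aw \in V_{j(w)}$ for some $j(v), j(w) \in I$, and whenever $v + w \neq 0$ the vector $A(v+w) = Av + Aw$ must itself lie in a single $V_l$; by uniqueness of the direct sum this forces $j(v) = j(w) = l$.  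Thus $j$ is constant on $V_i \setminus \{0\}$, and $A(V_i) = V_{\sigma(i)}$ for a well-defined bijection $\sigma : I \to I$.

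Finally, to finish I would induct on $k$, the base case $k = 1$ being trivial.  The subspace $M' := \bigoplus_{i \in I} V_i$ is $A$-invariant by the previous step, and its $R$-orthogonal complement $M'^\perp := \{w \in V : R(v, w, y, z) = 0 \text{ for all } v \in M',\, y, z \in V\}$ equals $\bigoplus_{j \notin I} V_j$ (one inclusion is clear; the reverse uses nondegeneracy of each $\varphi_i$ again to force each $I$-component of $w$ to vanish).  Since $A$ preserves $R$ and $M'$, it preserves $M'^\perp$; the restricted tensor $R|_{M'^\perp}$ decomposes as $\bigoplus_{j \notin I} R_{\varphi_j}$ and still has trivial kernel, so $A|_{M'^\perp}$ lies in its structure group.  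The inductive hypothesis then produces a permutation of $\{V_j\}_{j \notin I}$, and concatenating with $\sigma$ yields the desired element of $S_k$.  The main obstacle along the way is the kernel computation, because it is precisely the identification of $\ker_R(v)$ that lets $n$ single out vectors lying in one summand, which is what enables the minimum-dimension argument to localize $A$ in the first place.
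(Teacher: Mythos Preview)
Your proof is correct and takes a genuinely different route from the paper's.  The paper proves the theorem via a technical lemma that fixes $\varphi_i$-orthonormal bases $\beta_i$ for each $V_i$, writes $A$ as a matrix $(a_{st})$ in the resulting basis, and exploits the identity $R(f_r,e_s,f_t,e_w)=\pm a_{wr}a_{st}$ (valid when $s\neq w$ lie in the same block) together with $A^*R=R$ to force, by a careful index-chasing argument, each column of $A$ to be supported in a single block.  The permutation $\sigma$ is read off directly from the block pattern, with no induction.

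Your argument is coordinate-free: you package the same information into the invariant $n(v)=\dim V-\dim\ker_R(v)$, compute $\ker_R(v)$ exactly from nondegeneracy of the $\varphi_i$, and use $n$ to recognize the set $\bigcup_{i\in I}(V_i\setminus\{0\})$ of vectors lying in a single summand of minimal dimension as a $G_R$-invariant set; a short linear-algebra argument then shows $A$ permutes those $V_i$, and you peel them off and induct.  This avoids the matrix bookkeeping entirely and makes the mechanism (that $R$ itself encodes the decomposition) transparent.  The paper's approach, by contrast, delivers the full permutation in one pass and ties more visibly to the explicit orthonormal-basis description of $R_\varphi$, which the authors reuse elsewhere.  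Both approaches rely on the same two preliminary facts you note at the outset: $\dim V_i\ge 2$ and nondegeneracy of each $\varphi_i$.
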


Section \ref{section4} is devoted to the corollaries of Theorem \ref{multiphi}, and we give a complete description of $G_R$ for $R = \oplus_{i = 1}^kR_{\varphi_i}$ in terms of the signatures of the defining forms $\varphi_i$.     Corollary \ref{directproduct} demonstrates that if the model space decomposes into subspaces which are invariant under the action of the structure group, then the structure group itself decomposes as a  (group theoretic) internal direct product.

\begin{corollary}  \label{directproduct}
 Let $(V, R) = (V_1, R_1) \oplus (V_2, R_2)$, and suppose that $V_1$ and $V_2$ are $g-$invariant for all $g \in G_R$.  Then $G_R \cong G_{R_1} \times G_{R_2}$, the group theoretic internal direct product of $G_{R_1}$ and $G_{R_2}$.
\end{corollary}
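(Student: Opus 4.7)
The plan is to show that the restriction map
\[
\Phi:G_R\longrightarrow G_{R_1}\times G_{R_2},\qquad \Phi(g)=(g|_{V_1},\,g|_{V_2}),
\]
is a well-defined group isomorphism. Well-definedness is where the invariance hypothesis enters: because each $V_i$ is $g$-invariant, $g|_{V_i}$ lies in $Gl(V_i)$, and for $x,y,z,w\in V_i$ the relation $R=R_1\oplus R_2$ gives $R_i(g x,g y,g z,g w)=R(g x,g y,g z,g w)=R(x,y,z,w)=R_i(x,y,z,w)$, so $g|_{V_i}\in G_{R_i}$. That $\Phi$ is a homomorphism is immediate from the fact that composing maps which preserve the splitting respects the componentwise decomposition, and $\Phi$ is injective because $V=V_1\oplus V_2$ forces $g=\mathrm{id}_V$ whenever both restrictions are the identity.

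The only nontrivial point is surjectivity. Given $(h_1,h_2)\in G_{R_1}\times G_{R_2}$, define $h=h_1\oplus h_2\in Gl(V)$; I need to check $h\in G_R$. For this I first observe that $V_1\perp_R V_2$ (which is the content of $R=R_1\oplus R_2$) forces $R$ to vanish on any mixed 4-tuple. By the defining relation, $R$ is zero whenever its first two entries are a pair $(x_1,x_2)$ with $x_i\in V_i$, and by the pair-symmetry $R(a,b,c,d)=R(c,d,a,b)$ the same holds when the last two entries are of mixed type. The remaining mixed cases (for example, an argument $a\in V_1,\,b\in V_1,\,c\in V_2,\,d\in V_2$) are handled by the Bianchi identity
\[
R(a,b,c,d)+R(a,c,d,b)+R(a,d,b,c)=0,
\]
whose second and third terms are already known to vanish, so the first does as well.

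Consequently, for arbitrary $x=x_1+x_2,\dots,w=w_1+w_2\in V$, expanding by multilinearity leaves only the pure-$V_1$ and pure-$V_2$ contributions:
\[
R(x,y,z,w)=R_1(x_1,y_1,z_1,w_1)+R_2(x_2,y_2,z_2,w_2).
\]
Applying the same expansion to $R(h x,h y,h z,h w)$ and using $h_i\in G_{R_i}$ gives $R(h x,h y,h z,h w)=R(x,y,z,w)$, so $h\in G_R$ and $\Phi(h)=(h_1,h_2)$. The main obstacle is precisely this vanishing of all mixed terms of $R$; once that is in hand the rest of the argument is routine, and the result $G_R\cong G_{R_1}\times G_{R_2}$ follows.
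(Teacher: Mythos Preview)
Your proof is correct. The approaches differ in packaging rather than substance: the paper verifies the \emph{internal} direct product axioms (it embeds each $G_{R_i}$ in $G_R$ via block-diagonal matrices $j_i$, then checks trivial intersection, factorization $g=j_1(g_1)j_2(g_2)$, and normality of each $j_i(G_{R_i})$ by an explicit conjugation computation), whereas you build the \emph{external} isomorphism $\Phi(g)=(g|_{V_1},g|_{V_2})$ directly and check it is a bijective homomorphism. Your route is a bit more economical, since it avoids the normality computation entirely. You also make explicit the one point the paper's proof of this corollary leaves implicit: that $h_1\oplus h_2\in G_R$ requires the vanishing of $R$ on all mixed $4$-tuples, which is exactly the content of the paper's Proposition~\ref{Rbias}; your Bianchi argument reproduces that proposition in miniature.
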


 We use Corollary \ref{directproduct} and Theorem \ref{multiphi} to show in Corollary \ref{dimensions} that it is impossible for any member of the structure group to permute subspaces of different dimension.  Corollary \ref{dimensions} will also demonstrate that unless the signature of the forms involved are compatible, it is also impossible to permute subspaces of the same dimension.  According to Theorem \ref{multiphi}, there is a well-defined subgroup $S_k^R \leq S_k$ corresponding to every algebraic curvature tensor $R$ of the form found in Theorem \ref{multiphi}.  The following corollary demonstrates that this subgroup $S_k^R \neq S_k$ should the dimension of any of the subspaces $V_i$ or, up to replacing $\varphi$ with $-\varphi$, the signature of the forms involved differ.   
 
\begin{corollary}  \label{dimensions}
Let $\varphi_s \in S^2(V_s)$ be nondegenerate for $s = 1, \ldots, k$, and suppose $(V, R) = \oplus_{s = 1}^k (V_s, R_{\varphi_s})$.   Let $A \in G_R$, and suppose that for some $i, j$,  the signature of $\varphi_s$ is $(p_s, q_s)$ for $s = i, j.$
\begin{enumerate}
\item  If $A: V_i \to V_j$, then $\dim(V_i) = \dim(V_j)$.
\item  If $A: V_i \to V_j$, and ${\rm Rank}(\varphi_i) \geq 2$, then $(p_i, q_i) =  (p_j, q_j)$ or $(q_j, p_j)$.
\item  If $(p_i, q_i) = (p_j, q_j)$ or $(q_j, p_j)$, then there exists a $B \in G_R$ where $B|_{V_i}: V_i \to V_j$.  In this event, we must have $(B|_{V_i})^* R_{\varphi_j} = R_{\varphi_i}$.
\end{enumerate}
\end{corollary}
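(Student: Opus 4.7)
The plan is to handle the three assertions in turn, using Theorem \ref{multiphi}, Theorem \ref{Gphi}, and the orthogonality relations $V_s \perp_R V_t$ for $s \neq t$. For (1), any $A \in G_R$ is invertible, and Theorem \ref{multiphi} provides a permutation $\sigma \in S_k$ with $A(V_s) = V_{\sigma(s)}$; the restriction $A|_{V_i} \colon V_i \to V_{\sigma(i)} = V_j$ is therefore a linear isomorphism, forcing $\dim V_i = \dim V_j$.

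For (2), set $B := A|_{V_i} \colon V_i \to V_j$. Restricting the equation $A^* R = R$ to inputs from $V_i$, the right-hand side picks out only the $R_{\varphi_i}$ summand by the $\perp_R$ decomposition, while the left-hand side picks out only the $R_{\varphi_j}$ summand because $A$ sends $V_i$ into $V_j$, yielding $B^* R_{\varphi_j} = R_{\varphi_i}$ on $V_i$. A direct unwinding shows $B^* R_{\varphi_j} = R_{B^* \varphi_j}$, so $R_{B^* \varphi_j} = R_{\varphi_i}$. When $\mathrm{Rank}(\varphi_i) \geq 3$, the key input is the rigidity fact that two nondegenerate forms on a common space with equal canonical curvature tensors must agree up to sign; this is deducible from Theorem \ref{Gphi}(1) via an invariant-theory observation that any symmetric bilinear form preserved by the identity component of $O(\varphi_i)$ is a scalar multiple of $\varphi_i$, but I expect it to be established earlier in the paper (e.g., in Section \ref{section2}). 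Granted this, $B^* \varphi_j = \pm \varphi_i$, and since $B$ is an isomorphism $B^* \varphi_j$ shares its signature with $\varphi_j$, producing $(p_i, q_i) = (p_j, q_j)$ or $(q_j, p_j)$. If $\mathrm{Rank}(\varphi_i) = 2$, then by (1) $\dim V_i = \dim V_j = 2$, so $R_{B^* \varphi_j}$ and $R_{\varphi_i}$ each collapse to a single $2 \times 2$ determinant evaluated on any basis; the equality forces $\det \varphi_i$ and $\det \varphi_j$ to share a sign, and in dimension two this sign distinguishes definite from Lorentzian signatures, again producing the dichotomy.

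For (3), the plan is explicit construction. Sylvester's law of inertia supplies an isomorphism $B_1 \colon V_i \to V_j$ with $B_1^* \varphi_j = \epsilon \varphi_i$, where $\epsilon \in \{+1,-1\}$ is chosen according to whether $(p_j, q_j)$ equals $(p_i, q_i)$ or $(q_i, p_i)$. Define $B \in Gl(V)$ by $B|_{V_i} = B_1$, $B|_{V_j} = B_1^{-1}$, and $B|_{V_s} = \mathrm{id}_{V_s}$ for $s \notin \{i, j\}$. Checking $B^* R = R$ proceeds summand by summand: each $R_{\varphi_s}$ with $s \notin \{i, j\}$ is fixed because $B$ is the identity on $V_s$ and sends $V_i, V_j$ outside $V_s$, while $B^* R_{\varphi_i} = R_{\varphi_j}$ and $B^* R_{\varphi_j} = R_{\varphi_i}$ follow from $R_{-\varphi} = R_\varphi$ together with the symmetric swap $B_1 \leftrightarrow B_1^{-1}$. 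The final assertion $(B|_{V_i})^* R_{\varphi_j} = R_{\varphi_i}$ is then immediate from the construction.

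The main obstacle I anticipate is the rigidity input needed in (2); once that is available, the rest of the argument is careful bookkeeping with the direct-sum structure and standard signature normal forms.
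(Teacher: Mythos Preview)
Your proof is correct and follows essentially the same approach as the paper's: (1) via invertibility of $A$, (2) via the identity $R_{A^*\varphi_j}=R_{\varphi_i}$ together with the rank-$\geq 3$ rigidity result (which the paper indeed cites from \cite{G07} in the proof of Theorem~\ref{Gphi}, confirming your expectation) and a determinant-sign argument in rank $2$, and (3) via an explicit swap map built from Sylvester normal forms, which the paper phrases in terms of orthonormal bases. The only cosmetic difference is that the paper argues (1) by contradiction rather than directly invoking the permutation from Theorem~\ref{multiphi}.
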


We conclude Section \ref{section4} with a study of how freely the structure group may permute the subspaces $V_i$, again, in the situation of Theorem \ref{multiphi}.  Since Corollary \ref{dimensions} forbids the exchange of subspaces in certain situations, we consider the situation $V_i \cong V_j = W$, and $\varphi_i = \pm\varphi_j$ for every $i$ and $j$.  Without any loss of generality, we may freely replace $\varphi_j$ with $-\varphi_j$ if necessary and assume below in Corollary \ref{wreath} that $\varphi = \varphi_j$ for all $j$.

\begin{corollary} \label{wreath}
Suppose $(V, R) = \oplus_{i = 1}^k (W, R_{\varphi})$, where $\varphi\in S^2(W)$ is nondegenerate.  Then $G_R$ is isomorphic to the wreath product $G_{R_\varphi} \wr S_k$.
\end{corollary}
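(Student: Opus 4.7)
The plan is to realize $G_R$ as a semidirect product $G_{R_\varphi}^k \rtimes S_k$ with the standard permutation action, which is by definition the wreath product. By Theorem \ref{multiphi}, every $A \in G_R$ induces a permutation $\sigma_A \in S_k$ with $A(V_i) = V_{\sigma_A(i)}$, and the assignment $\pi: A \mapsto \sigma_A$ is clearly a group homomorphism $\pi: G_R \to S_k$. So the first task is to identify $\ker \pi$, show $\pi$ is surjective, and exhibit a splitting.

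For the kernel: an element $A \in \ker \pi$ preserves each $V_i$, so the $V_i$ are $G_R$-invariant for all such $A$. Iterating Corollary \ref{directproduct} $k-1$ times yields $\ker \pi \cong \prod_{i=1}^k G_{R_\varphi|_{V_i}} = G_{R_\varphi}^k$, where we have identified each $(V_i, R_\varphi)$ with $(W, R_\varphi)$. For surjectivity, I would use assertion (3) of Corollary \ref{dimensions}: since $(p_i,q_i) = (p_j,q_j)$ trivially for all $i,j$, for each transposition $\tau = (i\ j)$ there is some $B_\tau \in G_R$ with $B_\tau|_{V_i}:V_i \to V_j$; by composing with an element of $\ker \pi$ that flips back and fixes the remaining $V_\ell$, we may arrange $\sigma_{B_\tau} = \tau$. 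Since transpositions generate $S_k$, $\pi$ is surjective.

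To construct the splitting explicitly, fix identifications $\phi_i: W \to V_i$ with $\phi_i^* (R_\varphi|_{V_i}) = R_\varphi$; for each $\sigma \in S_k$ define $s(\sigma) \in Gl(V)$ by $s(\sigma)|_{V_i} = \phi_{\sigma(i)} \circ \phi_i^{-1}$. A direct check using $R = \oplus_i R_\varphi|_{V_i}$ together with $V_i \perp_R V_j$ for $i \neq j$ shows $s(\sigma) \in G_R$, and clearly $\pi \circ s = \mathrm{id}_{S_k}$. Thus $G_R = \ker\pi \rtimes s(S_k)$.

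The main point — and the only genuinely computational step — is to verify that the conjugation action of $s(S_k)$ on $\ker\pi \cong G_{R_\varphi}^k$ is the standard permutation action. Given $(A_1,\ldots,A_k) \in G_{R_\varphi}^k$ corresponding to the block-diagonal element $A = \oplus_i (\phi_i \circ A_i \circ \phi_i^{-1}) \in \ker \pi$, a direct calculation on each block gives
\[
s(\sigma) \, A \, s(\sigma)^{-1}\big|_{V_{\sigma(i)}} = \phi_{\sigma(i)} \circ A_i \circ \phi_{\sigma(i)}^{-1},
\]
so conjugation by $s(\sigma)$ carries $(A_1,\ldots,A_k)$ to $(A_{\sigma^{-1}(1)},\ldots,A_{\sigma^{-1}(k)})$, which is exactly the defining action of $S_k$ on $G_{R_\varphi}^k$ in the wreath product. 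Hence $G_R \cong G_{R_\varphi} \wr S_k$, as claimed.
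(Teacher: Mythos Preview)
Your argument is correct and reaches the conclusion by a route dual to the paper's. The paper writes down one explicit map $\Phi\colon G_{R_\varphi}\wr S_k \to G_R$, namely $\Phi(g_1,\ldots,g_k;\sigma)(v_1,\ldots,v_k)=(g_1 v_{\sigma^{-1}(1)},\ldots,g_k v_{\sigma^{-1}(k)})$, and verifies directly that it is an injective, surjective homomorphism, invoking Theorem~\ref{multiphi} only for surjectivity. Your approach---surject $G_R$ onto $S_k$ via Theorem~\ref{multiphi}, identify the kernel, construct a section, and check the conjugation action---is the standard structural characterization of a semidirect product and is equally valid; it is more conceptual, while the paper's single-map verification is more compact. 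Two small points to clean up: Corollary~\ref{directproduct} as stated requires that \emph{all} of $G_R$ preserve each summand, which fails here, so for $\ker\pi$ you should simply argue directly that a block-diagonal $A=\oplus_i A_i$ preserves $R=\oplus_i R_\varphi$ if and only if each $A_i\in G_{R_\varphi}$; and your separate surjectivity paragraph is both confused (elements of $\ker\pi$ induce the identity permutation and hence cannot ``flip back'' anything) and unnecessary, since $\pi\circ s=\mathrm{id}_{S_k}$ already gives surjectivity once $s$ is built.
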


When Corollary \ref{wreath} is used in concert with Corollary \ref{directproduct}, we arrive at the following result, which easily generalizes from two direct summands $V = \oplus_{p = 1}^2 V_p$ to any finite number of direct summands, and completes the classification of the structure group of the algebraic curvature tensors considered here. 

\begin{corollary}  \label{morewreath}
Let $(V_p, R_p) = \oplus_{i = 1}^{k_p} (W_p, R_{\varphi_p})$, and let $(V, R) = (V_1, R_1) \oplus (V_2, R_2)$.  If $\dim W_1 \neq \dim W_2$, then $G_R \cong (G_{R_{\varphi_1}} \wr Sym_{k_1}) \times (G_{R_{\varphi_2}} \wr Sym_{k_2})$.
\end{corollary}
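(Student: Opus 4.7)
The plan is to assemble the statement by combining the three preceding results in the expected order. Write $(V, R) = \bigoplus_{i=1}^{k_1} (W_1, R_{\varphi_1}) \oplus \bigoplus_{j=1}^{k_2} (W_2, R_{\varphi_2})$, so that $R$ is a direct sum of $k_1 + k_2$ canonical curvature tensors of the type considered in Theorem \ref{multiphi}. Assuming that none of the $\varphi_p$ are degenerate (so that $\ker R = 0$, which is needed to apply Theorem \ref{multiphi}), any $A \in G_R$ induces a permutation $\sigma$ of the $k_1 + k_2$ summands.

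The key step is to show that $V_1$ and $V_2$ are each invariant under $G_R$. For this I would invoke Corollary \ref{dimensions}(1): since $A$ restricted to any summand is a linear isomorphism onto another summand, and $\dim W_1 \neq \dim W_2$, no $W_1$-summand can map to a $W_2$-summand and vice versa. Hence $\sigma$ decomposes as $\sigma_1 \times \sigma_2$ with $\sigma_1 \in S_{k_1}$ permuting the $W_1$-summands and $\sigma_2 \in S_{k_2}$ permuting the $W_2$-summands, so $A(V_1) = V_1$ and $A(V_2) = V_2$.

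With this invariance in hand, Corollary \ref{directproduct} applies directly and yields the internal direct product decomposition
\[
G_R \cong G_{R_1} \times G_{R_2}.
\]
Finally, each factor $(V_p, R_p) = \bigoplus_{i=1}^{k_p}(W_p, R_{\varphi_p})$ is exactly the setup of Corollary \ref{wreath}, so $G_{R_p} \cong G_{R_{\varphi_p}} \wr \mathrm{Sym}_{k_p}$. Substituting gives the stated isomorphism
\[
G_R \cong (G_{R_{\varphi_1}} \wr \mathrm{Sym}_{k_1}) \times (G_{R_{\varphi_2}} \wr \mathrm{Sym}_{k_2}).
\]

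The only genuinely substantive step is the dimension argument guaranteeing that $V_1$ and $V_2$ are $G_R$-invariant; everything else is a direct application of earlier results. The main obstacle, if any, is verifying that the hypotheses of the earlier corollaries are met, in particular that Theorem \ref{multiphi} applies (which requires $\ker R = 0$, tacitly assumed since the $\varphi_p$ are nondegenerate in order for the wreath-product description to be meaningful). The generalization to finitely many direct summands $V = V_1 \oplus \cdots \oplus V_r$ with all $\dim W_p$ distinct is then immediate by induction on $r$, since the same dimension argument forces invariance of each $V_p$ simultaneously.
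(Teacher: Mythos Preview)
Your proof is correct and follows essentially the same route as the paper: establish $G_R$-invariance of $V_1$ and $V_2$ from the dimension hypothesis, apply Corollary \ref{directproduct} to split $G_R$ as a direct product, then apply Corollary \ref{wreath} to each factor. The only difference is that you explicitly cite Corollary \ref{dimensions}(1) for the invariance step, whereas the paper leaves that reference implicit.
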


In Section 5 we prove Theorem \ref{invariants}:  

\begin{theorem} \label{invariants}
Let $\mathfrak{M} = \oplus^s_{i=1} (V_i, R_{\varphi_i}, A^i_1, \ldots, A^i_k)$ be a decomposible model space, and let $\alpha_i$ be an invariant of the model $\mathfrak{M}_i = (V_i,  R_{\varphi_i}, A^i_1, \ldots, A^i_k)$.  Then any symmetric function of the $\alpha_i$ is an invariant of $\mathfrak{M}$.
\end{theorem}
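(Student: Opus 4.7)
The plan is to use Theorem \ref{multiphi} to show that any element of the relevant structure group merely permutes the summands $\mathfrak{M}_i$ of $\mathfrak{M}$, so that the tuple $(\alpha_1, \ldots, \alpha_s)$ is permuted by the group action; symmetry of $f$ then delivers invariance for free. Concretely, I would first fix the submodel $\mathfrak{M}' \subseteq \mathfrak{M}$ whose structure group is to witness the invariance of $f(\alpha_1,\ldots,\alpha_s)$. The natural choice is $\mathfrak{M}' = \oplus_{i=1}^s \mathfrak{M}'_i$, where $\mathfrak{M}'_i \subseteq \mathfrak{M}_i$ is the submodel whose structure group witnesses the invariance of $\alpha_i$ (in particular containing $R_{\varphi_i}$). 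Any $g \in G_{\mathfrak{M}'}$ lies in $G_R$ for $R = \oplus_{i=1}^s R_{\varphi_i}$, and Theorem \ref{multiphi} then supplies $\sigma \in S_s$ with $g(V_i) = V_{\sigma(i)}$.

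The next step is to descend the action of $g$ to each summand. For any tensor $B = \oplus_{i=1}^s B^i$ of $\mathfrak{M}'$, decomposability says $B^i$ is supported on the $V_i$-block; plugging arguments from $V_i$ into the identity $g^* B = B$ and using that only the $V_{\sigma(i)}$-block of $B$ contributes after applying $g$ yields $(g|_{V_i})^* B^{\sigma(i)} = B^i$. Thus $g|_{V_i} \colon V_i \to V_{\sigma(i)}$ is an isomorphism of model spaces sending each tensor of $\mathfrak{M}'_{\sigma(i)}$ to the corresponding tensor of $\mathfrak{M}'_i$.

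Finally, interpreting the $\alpha_i$ as the values of a common naturally defined invariant applied to each summand (as is already implicit in the symmetric function $f(\alpha_1,\ldots,\alpha_s)$ being well defined in the first place), the naturality of that invariant under the isomorphism $g|_{V_i}$ identifies $\alpha_i(\mathfrak{M}_i)$ with $\alpha_{\sigma(i)}(\mathfrak{M}_{\sigma(i)})$. Hence $g$ permutes the tuple $(\alpha_1, \ldots, \alpha_s) \in K^s$ by $\sigma$, the symmetric function $f$ is fixed, and $f(\alpha_1, \ldots, \alpha_s)$ is invariant under $G_{\mathfrak{M}'}$ as desired. The main obstacle is the bookkeeping in the descent step; everything else is packaged into Theorem \ref{multiphi}, which is indispensable precisely because it is what forces $g$ to respect the summand decomposition at all.
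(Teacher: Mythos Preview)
Your proposal is correct and follows essentially the same route as the paper: invoke Theorem \ref{multiphi} to obtain the permutation $\sigma$, verify that $g|_{V_i}$ pulls the $\sigma(i)$-th summand's tensors back to the $i$-th (the paper cites Corollary \ref{dimensions}(3) for this, whereas you derive it directly from $g^*B=B$ and decomposability), and then appeal to the symmetry of $f$. The only noteworthy difference is that the paper fixes $\mathfrak{M}' = \oplus_i (V_i, R_{\varphi_i})$ and works concretely with bases $\beta_i$, while you allow a more general $\mathfrak{M}'_i \supseteq (V_i, R_{\varphi_i})$ and argue via model space isomorphisms; the paper's choice yields invariance under the largest possible group, but the logical skeleton is identical.
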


We give some introductory remarks before proving Theorem \ref{invariants} that put the theory of model space invariants into context, then we continue by determining the structure group of any Riemannian Skew-Tsankov manifold (see Definiton \ref{skewtsankov} and Theorem \ref{skewmodel}), and construct an isometry invariant that is not of Weyl type on a curvature homogeneous manifold modeled on the situation in Corollary \ref{morewreath} (see Theorem \ref{ppexample}).

We include a short and nontechnical summary of our results in Section \ref{section6} that summarizes our work, and gives a method of characterizing any structure group of any algebraic curvature tensor which is the direct sum of any finite number of canonical algebraic curvature tensors.

\section{Preliminary notions and the determination of $G_{R_{\varphi}}$}  \label{section2}

There are several preliminary comments we will need to make that will properly set the stage for our subsequent study of structure groups in general.  These preliminary comments are either observational, straightforward, or can be found in \cite{G07}.  We conclude this section with a computation of the structure group $G_{R_\varphi}$.

We recall that the definition of a decomposable model space seemed to favor the first two slots of the tensors involved, and that the definition of $\ker(R)$ in Equation (\ref{kerR}) seemed to favor the first entry of $R$.  We verify below that for the model spaces we consider in this paper, there is no concern.

\begin{proposition}  \label{Rbias}
Let $R \in \mathcal{A}(V)$, and suppose that $(V, R)= (V_1, R_1) \oplus (V_2, R_2)$ is decomposible.  Then for every $x_i \in V_i,$ and every $y, z \in V$, we have 
$$
R(x_1, y, x_2, z) = R(x_1, y, z, x_2) = R(y, x_1, x_2, z) = $$  $$R(y, x_1, z, x_2) = R(y, z, x_1, x_2) = 0.
$$
\end{proposition}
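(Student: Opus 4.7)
The plan is to show that all five equalities follow from (i) the defining hypothesis $V_1 \perp_R V_2$, (ii) the standard curvature symmetries in \eqref{R}, and (iii) a single application of the Bianchi identity. First I would reduce the number of distinct claims. Using pair symmetry together with skew-symmetry in the first two slots, one obtains skew-symmetry in the last two slots, and combining these three tells us that
\[
R(y, x_1, x_2, z),\ R(x_1, y, z, x_2),\ R(y, x_1, z, x_2)
\]
all differ by a sign from $R(x_1, y, x_2, z)$. Likewise $R(y, z, x_1, x_2) = R(x_1, x_2, y, z)$ by pair symmetry, and this last quantity vanishes immediately by the hypothesis $V_1 \perp_R V_2$. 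So I am reduced to proving the single identity
\[
R(x_1, y, x_2, z) = 0 \qquad (x_i \in V_i,\ y, z \in V).
\]

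Next I would record the slightly stronger consequence of the hypothesis: since $R(x_1, x_2, v, w) = 0$ for all $v, w \in V$, pair symmetry gives $R(v, w, x_1, x_2) = 0$ as well. Hence $R$ vanishes whenever the first two entries lie in different summands \emph{or} the last two entries lie in different summands. With this in hand, I would decompose $y = y_1 + y_2$ and $z = z_1 + z_2$ in $V_1 \oplus V_2$ and expand $R(x_1, y, x_2, z)$ by multilinearity into four terms. Three of these four terms have the property that the first two arguments already lie in different summands (namely the ones involving $y_2$) or the last two do (the one involving $z_2$ together with $y_1$), so they vanish by the preceding observation.

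The one remaining term is $R(x_1, y_1, x_2, z_1)$, and this is the only place where the Bianchi identity is actually needed. Applying Bianchi in the last three slots gives
\[
R(x_1, y_1, x_2, z_1) + R(x_1, x_2, z_1, y_1) + R(x_1, z_1, y_1, x_2) = 0.
\]
The middle term has $x_1, x_2$ in slots one and two and so vanishes by $V_1 \perp_R V_2$. The third term has $y_1, x_2$ in slots three and four, which lie in different summands, and therefore vanishes by the extended observation above. Hence $R(x_1, y_1, x_2, z_1) = 0$, completing the proof.

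I do not expect a real obstacle here: the content is bookkeeping, and the only nontrivial step is recognizing that a single Bianchi identity reduces the ``truly mixed'' configuration to two configurations already known to vanish. The main thing to be careful about is not conflating the hypothesis (which only directly controls slots one and two) with the stronger statement about slots three and four, which must be derived from pair symmetry before any of the case analysis can proceed cleanly.
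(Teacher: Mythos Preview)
Your overall approach is correct and essentially the same as the paper's: reduce via the curvature symmetries to a single mixed configuration, decompose the free arguments along $V_1\oplus V_2$, and kill the one genuinely mixed term with a single Bianchi identity. However, you have swapped the roles of $z_1$ and $z_2$ in the case analysis, and as written the argument has a gap.

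Concretely, the term $R(x_1,y_1,x_2,z_2)$ does \emph{not} have its last two entries in different summands: both $x_2$ and $z_2$ lie in $V_2$. So your claimed reason for discarding it is wrong. Conversely, the term you single out for the Bianchi step, $R(x_1,y_1,x_2,z_1)$, already has $x_2\in V_2$ and $z_1\in V_1$ in the last two slots, so it dies immediately by your extended observation---the Bianchi identity you write for it is correct but superfluous. The fix is simply to interchange $z_1$ and $z_2$ in your last two paragraphs: the remaining term is $R(x_1,y_1,x_2,z_2)$, and Bianchi gives
\[
R(x_1,y_1,x_2,z_2)+R(x_1,x_2,z_2,y_1)+R(x_1,z_2,y_1,x_2)=0,
\]
where now the second and third terms each have their first two entries in different summands and vanish by the hypothesis directly. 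With that correction your proof is complete and matches the paper's, which likewise isolates $R(x_1,y_1,z_2,x_2)$ (your term up to a sign) as the only case requiring Bianchi.
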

In other words, Proposition \ref{Rbias} demonstrates that within the realm of decomposible model spaces of the form $(V, R)$ where $R \in \mathcal{A}(V)$, the definition of decomposible is not biased toward the first two entries.  In addition, this Proposition applies equally well to any model space of the form $(V, \varphi, R)$ where $\varphi \in S^2(V)$ and $R \in \mathcal{A}(V)$.

\begin{proof}[Proof of Proposition \ref{Rbias}.]  Suppose that $(V, R) = (V_1, R_1) \oplus (V_2, R_2)$.  By using the symmetries in Equation (\ref{R}), one only needs to show $R(x_1, y, z, x_2) = 0$.  Decompose $y = y_1 + y_2,$ where $y_i \in V_i$.  Then by assumption, $R(x_1, y, z, x_2) = R(x_1, y_1, z, x_2)$.  By again using the symmetries in Equation (\ref{R}), using our hypothesis of decomposition, and decomposing $z = z_1 + z_2$ where $z_i \in V_i$, we have
$$
R(x_1, y_1, z, x_2) = R(z, x_2, x_1, y_1) = R(z_2, x_2, x_1, y_1) = R(x_1, y_1, z_2, x_2).
$$
We now use the Bianchi identity and again our hypothesis to show that $R(x_1, y, z, x_2) = R(x_1, y_1, z_2, x_2) = 0$ and complete the proof:
$$
\begin{array}{r c c c c c c}
0 & = & R(x_1, y_1, z_2, x_2) &+& R(x_1, x_2, y_1, z_2) &+& R(x_1, z_2, x_2, y_1) \\
& = & R(x_1, y_1, z_2, x_2) &+& 0 &+& 0 \\
& = & R(x_1, y_1, z_2, x_2).& & & & \end{array}$$
\end{proof}

\begin{proposition}  \label{kerbias}
Let $R \in \mathcal{A}(V)$, and let $\ker(R)$ be defined as in Equation (\ref{kerR}).  We have
$$
\begin{array}{r c l}
\ker(R) & = & \{ v \in V | R(x, v, y, z) = 0 {\rm\ for\ all\ } x, y, z \in V\} \\
& = & \{ v \in V | R(x, y, v,  z) = 0 {\rm\ for\ all\ } x, y, z \in V\} \\
& = & \{ v \in V | R(x, y, z, v) = 0 {\rm\ for\ all\ } x, y, z \in V\}.
\end{array}
$$
\end{proposition}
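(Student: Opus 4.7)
The plan is to observe that the three symmetries in Equation (\ref{R}) force the vanishing condition in any one slot to be equivalent to the vanishing condition in any other slot; the Bianchi identity is not needed here, only the two symmetry identities. Let me write $K_i \subseteq V$ for the set defined by requiring $v$ to occupy the $i$-th slot, so the claim is that $K_1 = K_2 = K_3 = K_4$.

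First I would handle $K_1 = K_2$. The antisymmetry $R(x, y, z, w) = -R(y, x, z, w)$ gives $R(v, x, y, z) = -R(x, v, y, z)$ for all $x, y, z$. Hence the condition $R(v, x, y, z) = 0$ for all $x, y, z \in V$ is equivalent to the condition $R(x, v, y, z) = 0$ for all $x, y, z \in V$, which is exactly $K_1 = K_2$. Similarly, the pair-swap symmetry $R(x, y, z, w) = R(z, w, x, y)$, applied in the form $R(v, x, y, z) = R(y, z, v, x)$, yields $K_1 = K_3$ upon relabeling the dummy variables. Combining the antisymmetry in the first pair with the pair-swap identity produces the derived antisymmetry in the second pair,
\begin{equation*}
R(x, y, z, w) = R(z, w, x, y) = -R(w, z, x, y) = -R(x, y, w, z),
\end{equation*}
which immediately gives $K_3 = K_4$ by the same argument that established $K_1 = K_2$.

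Chaining these equalities gives $K_1 = K_2 = K_3 = K_4$, which is the statement. I expect no genuine obstacle; the entire argument is a short piece of index bookkeeping driven by the first two identities in Equation (\ref{R}), and the Bianchi identity is not invoked. The only care needed is to keep dummy variables distinct from $v$ when applying the pair-swap symmetry, so that the quantifier over $x, y, z \in V$ is genuinely the same after relabeling.
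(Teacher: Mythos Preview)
Your proof is correct and follows essentially the same approach as the paper's own proof. The paper simply writes the chain $R(v, x, y, z) = -R(x, v, y, z) = R(y, z, v, x) = -R(y, z, x, v)$ using the symmetries in Equation~(\ref{R}), which is exactly your argument in compressed form; your $K_i$ notation and explicit derivation of the antisymmetry in the last pair just make the bookkeeping more visible.
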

\begin{proof}
Let $v\in V$ be given, and $x, y,$ and $z$ arbitrary.  Then using the symmetries in Equation (\ref{R}), we have $R(v, x, y, z) = -R(x, v, y, z) = R(y, z, v, x) = -R(y, z, x, v)$.
\end{proof}
We briefly remark that a similar proof and the second Bianchi identity shows that, if $\nabla R$ is an algebraic covariant derivative curvature tensor, one may define $\ker \nabla R$ to be the set of vectors that force $\nabla R$ to vanish, regardless of the slot one considers, in analogy to Proposition \ref{kerbias}.  See Assertion 1(c) of Theorem \ref{ppalreadyknown}.

Suppose $(V, R)$ is a model space with $R \in \mathcal{A}(V)$.  If $x \in \ker R$, and $A \in G_R$, then for $A\tilde y = y$, $A\tilde z = z,$ and $A\tilde w = w$, we have 
$$
R(Ax, \tilde y, \tilde z, \tilde w) = A^*R(x, y, z, w) = 0.
$$
Thus $A:\ker R \to \ker R$, and so $\ker R$ is an invariant subspace.  So choosing a basis for $\ker R$ and extending it to a full basis for $V$ demonstrates that every $A\in G_R$ takes the block matrix form with respect to any such basis
$$
A = \left[
\begin{array}{c c}
\bar A& 0 \\ C & N
\end{array}
\right],
$$
where $N \in Gl(\ker R)$, $C$ is \emph{any} submatrix of appropriate size, and $\bar A \in G_{\bar R}$, where $\bar R\in \mathcal{A}(\bar V)$, and $\bar V:=V/\ker R$, and $\bar R$ is the pullback of $R$ under the canonical projection $\pi: V \to \bar V$ as discussed in the introduction.  It is for this reason that we study model spaces $(V, R)$ where $\ker R = 0$, since if $\ker R \neq 0$, the computation of $G_R$ translates directly into a study of an associated algebraic curvature tensor with trivial kernal, as in Theorem \ref{Gphi}.  It is known \cite{G07} that if ${\rm Rank}(\varphi) \geq 2$, then $\ker R_{\varphi} = \ker \varphi$, so the assumption of nondegeneracy of $\varphi$ in Assertions (1)--(3) of Theorem \ref{Gphi} is equivalent to $R_\varphi$ having a trivial kernal.  

\begin{proof}[Proof of Theorem \ref{Gphi}.]
Suppose $A \in G_{R_\varphi}.$    By the definition of $R_\varphi$, we have $R_\varphi = A^*R_\varphi = R_{A^*\varphi}$.  If ${\rm Rank}(\varphi) \geq 3$, we may use Lemma 1.6.3 of \cite{G07} to conclude that $A^*\varphi = \pm \varphi$.   Since $R_\varphi = R_{-\varphi}$, the opposite containment holds, and Assertion (1) is established.  To prove Assertion (2), we must eliminate the possibility that $A^*\varphi = -\varphi$, i.e., $A$ is a para-isometry.  But it is well known that para-isometries  only exist in the balanced signature setting.  Indeed, such a map would exchange the causal type of any element in an orthonormal basis. 
If ${\rm Rank}(\varphi) = \dim V = 2$, then using the curvature symmetries, one has for $A \in G_{R_\varphi}$ that $R(x, y, y, x)$ is the only nonzero entry up to the symmetries found in Equation (\ref{R}), where $\{x, y\}$ is a  linearly independent set.  One easily computes
$$
R_{\varphi}(Ax, Ay, Ay, Ax) = (\det A)^2 R_{\varphi}(x, y, y, x).
$$
Since ${\rm Rank}(\varphi) = 2$, we have $R_{\varphi} \neq 0$, so $(\det A)^2 = 1$.  Assertion (3) now follows.
To prove Assertion (4), we note that $\mathcal{A}(V) = 0$ for any vector space of dimension less than 2.
\end{proof}

\section{The determination of $G_R$, where $R = \oplus R_{\varphi_i}$}  \label{section3}

Now that we have a complete understanding of $G_{R_{\varphi}}$, we turn our attention to pursuing an understanding of $G_R$ where $R = \oplus_{i = 1}^k R_{\varphi_i}$, on a model space that is decomposable.  According to the discussion preceding the proof of Theorem \ref{Gphi}, we need only consider those $R$ with $\ker R = 0$, and Lemma \ref{preA} demonstrates that this is equivalent to each $\varphi_i \in S^2(V)$ being nondegenerate.  Moreover, it is shown in \cite{G07} that $(V, R_{\varphi})$ is an indecomposable model space if and only if $\varphi$ is nondegenerate, so that our assumed decomposition is, in a certain sense, a complete one.

In this section we establish our main result regarding the structure group of the decomposable model space $(V, R) = \oplus_{i = 1}^k (V_i, R_{\varphi_i})$, with the assumption $\ker R = 0$.  We begin with a straightforward lemma.

\begin{lemma}  \label{preA}
Let $(V, R) = (V_1, R_1) \oplus (V_2, R_2)$ be a decomposable model space with $\ker R = 0$.  Let $\varphi_1, \varphi_2, \varphi \in S^2(V^*)$.
\begin{enumerate}
\item  We have $\ker R_i \cap V_i = 0$, for $i = 1, 2$.
\item  If $R_i = R_{\varphi_i}$, and ${\rm Rank}(\varphi_i) \geq 2$ for $i = 1, 2$, then $\ker \varphi_i \cap V_i = 0$.
\item  Suppose $\ker \varphi = 0$, and $\beta = \{e_1, \ldots, e_\ell\}$ is an orthonormal basis with respect to $\varphi$.  Then, for every $e_i \in \beta$ and $j \neq i$, the entries $R_\varphi(e_i, e_j, e_j, e_i)\neq 0$ are the only nonzero entries of $R_\varphi$ on this basis.  
\end{enumerate}
\end{lemma}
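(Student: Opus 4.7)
The plan is to handle each assertion in turn, using Proposition \ref{Rbias} as the key technical tool for (1) and then bootstrapping to (2) via the quoted fact that $\ker R_\varphi = \ker \varphi$ whenever $\mathrm{Rank}(\varphi) \geq 2$.

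For (1), I would argue by contradiction, or equivalently, show that any $x \in \ker R_i \cap V_i$ actually lies in $\ker R$. Fix, say, $x \in \ker R_1 \cap V_1$, and let $y, z, w \in V$ be arbitrary; decompose $y = y_1 + y_2$, $z = z_1 + z_2$, $w = w_1 + w_2$ with $y_j, z_j, w_j \in V_j$. Expanding $R(x, y, z, w)$ by multilinearity yields a sum of eight terms, and Proposition \ref{Rbias} kills every summand that contains at least one entry from $V_2$ paired with the $V_1$-vector $x$. The only surviving term is $R(x, y_1, z_1, w_1) = R_1(x, y_1, z_1, w_1) = 0$. Thus $x \in \ker R = 0$, as desired.

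For (2), by the remark preceding the lemma (see \cite{G07}), the hypothesis $\mathrm{Rank}(\varphi_i) \geq 2$ ensures $\ker R_{\varphi_i} = \ker \varphi_i$ when both are viewed as subspaces of $V_i$. Intersecting both sides with $V_i$ and applying assertion (1) gives $\ker \varphi_i \cap V_i = \ker R_i \cap V_i = 0$. (If one prefers a self-contained derivation, assume $v \in \ker \varphi_i \cap V_i$: then $R_{\varphi_i}(v, x, y, z) = \varphi_i(v,z)\varphi_i(x,y) - \varphi_i(v,y)\varphi_i(x,z) = 0$ for all $x,y,z$, so $v \in \ker R_i \cap V_i$, which is $0$ by (1); and the reverse implication when $\mathrm{Rank}(\varphi_i) \geq 2$ is the short rank-$1$ argument that $R_\varphi(v,x,y,w) = 0$ with $\varphi(v,w) \neq 0$ would force $\varphi$ to be a rank-one form.)

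For (3), this is a direct computation using orthonormality. Write $\varphi(e_i, e_j) = \varepsilon_i \delta_{ij}$ with $\varepsilon_i = \pm 1$ (nonzero since $\ker \varphi = 0$). Then
$$
R_\varphi(e_a, e_b, e_c, e_d) = \varepsilon_a \varepsilon_b \delta_{ad}\delta_{bc} - \varepsilon_a \varepsilon_b \delta_{ac}\delta_{bd}.
$$
This vanishes unless $(a,b,c,d)$ satisfies $a = d$, $b = c$ or $a = c$, $b = d$, and even in those cases, if $a = b$ the two terms cancel. The only non-vanishing entries are therefore $R_\varphi(e_i, e_j, e_j, e_i) = \varepsilon_i \varepsilon_j$ for $i \neq j$ (together with the entries obtained from the curvature symmetries of Equation (\ref{R})), which is the stated conclusion.

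The main obstacle, such as it is, lies in assertion (1), where one has to be careful that the decomposability of $R$ is being used in the full form guaranteed by Proposition \ref{Rbias}, rather than only the orthogonality in the first two slots built into the definition; the remaining assertions are essentially calculations once (1) is in hand.
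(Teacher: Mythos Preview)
Your proof is correct and follows essentially the same route as the paper's. For (1) the paper phrases the argument as ``$R(v,x,y,z)=R_1(v,\tilde x,\tilde y,\tilde z)$ via projection to $V_1$'' rather than expanding into eight terms and invoking Proposition~\ref{Rbias}, and for (3) it states the computation more briefly without the Kronecker-delta bookkeeping, but the ideas are identical.
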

\begin{proof}
We establish Assertion (1) by proving $\ker R_i \cap V_i \subseteq \ker R = 0$.  Without loss of generality, suppose $v \in \ker R_1 \cap V_1$. Then since $v \in V_1$ and $R = R_1 \oplus R_2$, for any $x, y, z \in V$ we have $R(v, x, y, z) = R_1(v, \tilde x, \tilde y, \tilde z)$, where $\tilde x, \tilde y,$ and $\tilde z$ are the projections of $x, y,$ and $z$, respectively, to $V_1$.  Since $v \in \ker R_1$, we have $R(v, x, y, z) = R_1(v, \tilde x, \tilde y, \tilde z) = 0.$  Thus $v \in \ker R$, and so $v = 0$.

Assertions (2) and (3) are almost immediate.  Since $\ker R = 0$, by Assertion (1), we have $\ker R_{\varphi_i} \cap V_i= 0$.  Since $\ker R_{\varphi_i} = \ker \varphi_i$ when ${\rm Rank}(\varphi_i) \geq 2$, Assertion (2) follows.  Assertion (3) follows as well, since in the event $\ker \varphi = 0$, there are no null vectors in any orthonormal basis, and so 

$$R_\varphi(e_i, e_j, e_j, e_i) = \varphi(e_i, e_i) \varphi (e_j, e_j) = \pm 1 \neq 0.$$
In the event that $i, j, k$ are distinct, then one verifies $R_\varphi(e_i, e_j, e_k, e_\ell) = 0.$
\end{proof}

We may now prove Lemma \ref{technical}, and the proof of our main result Theorem \ref{multiphi} will follow as a direct result. We first  pause to establish some useful notation.  Suppose $V_i$ are subspaces of $V$ with $\dim(V) = N$, and  $\oplus_{i = 1}^k V_i = V$, and $\beta_i$ is a basis for $V_i$.  Now $\beta = \beta_1 \cup \cdots \cup \beta_k = \{e_1, \ldots, e_N\}$ is an ordered basis for $V$.    Set $a_1 = 1,$ and for each $r = 2, \ldots, k$, recursively set $a_r = a_{r-1} + \dim(V_{r-1})$, so that $e_{a_p}$ is the first basis vector of $\beta_p= \{e_{a_p}, \ldots, e_{a_{p+1}-1}\}$.  For each index $i = 1, \ldots, N$, we define $n_i \in \{1, \ldots, k\}$ to be the unique number so that $e_i \in \beta_{n_i}$.  For example, $n_{a_p} = p$.  This tool will allow us to locate each basis vector according to the subspace that contains it.

The proof of Lemma \ref{technical} is somewhat technical, and so we provide a nontechnical summary and short example to demonstrate the method of proof. 

\begin{lemma}  \label{technical}
Suppose $(V, R) = \oplus_{i = 1}^k (V_i, R_{\varphi_i})$ is a model space with $\ker R = 0$, and let $A \in G_R$.  Let $\beta_i$ be an orthonormal basis for $V_i$ with respect to $\varphi_i$, and create the ordered basis $\beta$ for $V$ as above.  Let $f_i = Ae_i = \sum_j a_{ji}e_j$.
\begin{enumerate}
\item   For each $i = 1, \ldots, N$, there exists a well-defined number $w_i$ so that if $n_i \neq n_t$, for any $s$ with $n_{w_i} = n_s$, then $a_{st} = 0$.
\item  For any $t$ with $n_t \neq n_i$, we have $f_t \in {\rm Span}(\cup_{j \neq n_{w_i}} \beta_j)$.
\item  The assignment $n_i \mapsto \sigma(i) = n_{w_i}$ is  well-defined.

\item  The function $\sigma (n_i) = n_{w_i}$ is a permutation.
\item  $Ae_i \in {\rm Span}(\beta_{\sigma(n_i)})$ for all $i \in \{1, \ldots, N\}$.
\item  $A: V_i \to V_{\sigma(i)}$ for all $i \in \{1, \ldots, k\}$.
\end{enumerate}
\end{lemma}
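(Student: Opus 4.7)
The plan is to reduce everything to assertion (1); assertions (2)--(6) then follow via dimension counts, an application of the same machinery to $A^{-1} \in G_R$, and the indecomposability of $(V_q, R_{\varphi_q})$ guaranteed by nondegeneracy of $\varphi_q$ (Lemma \ref{preA} together with the discussion just before it).

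For (1), I would pick $w_i$ to be any index with $a_{w_i,i} \neq 0$; such an index exists because $A$ is invertible, so $f_i = Ae_i \neq 0$. The invariance $A^*R = R$ supplies the scalar identities $R(f_{i_1}, f_{i_2}, f_{i_3}, f_{i_4}) = R(e_{i_1}, e_{i_2}, e_{i_3}, e_{i_4})$ for every choice of basis indices. Using the direct-sum structure $R = \oplus_p R_{\varphi_p}$ and denoting the projection $V \to V_p$ by $\pi_p$, the left-hand side expands to
\[
\sum_p \bigl[\varphi_p(\pi_p f_{i_1}, \pi_p f_{i_4}) \varphi_p(\pi_p f_{i_2}, \pi_p f_{i_3}) - \varphi_p(\pi_p f_{i_1}, \pi_p f_{i_3}) \varphi_p(\pi_p f_{i_2}, \pi_p f_{i_4})\bigr],
\]
while by Lemma \ref{preA}(3) the right-hand side is $\epsilon_{i_1}\epsilon_{i_2}$ exactly when the four indices lie in a common $V_p$ and pair off as $i_1 = i_4 \neq i_2 = i_3$, and is $0$ otherwise. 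The plan is to feed the vanishing relations $R(f_i, f_t, f_t, f_i) = 0$ for $n_t \neq n_i$, the nonvanishing relations $R(f_i, f_j, f_j, f_i) = \epsilon_i \epsilon_j$ for $n_j = n_i$ with $j \neq i$, and mixed quadruples $R(f_i, f_j, f_t, f_u)$ into these polynomial identities, and then use $a_{w_i, i} \neq 0$ together with the nondegeneracy of $\varphi_{n_{w_i}}$ on $V_{n_{w_i}}$ to conclude that the $V_{n_{w_i}}$-component of $f_t$ must vanish whenever $n_t \neq n_i$.

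Assertion (2) is then an immediate reformulation of (1). For (3), suppose $n_i = n_{i'}$ but $n_{w_i} \neq n_{w_{i'}}$; applying (2) to both indices yields $A(\oplus_{q \neq n_i} V_q) \subseteq \oplus_{r \notin \{n_{w_i}, n_{w_{i'}}\}} V_r$, so that a dimension comparison forces $\dim V_{n_{w_i}} + \dim V_{n_{w_{i'}}} \leq \dim V_{n_i}$, and combined with $a_{w_i, i} \neq 0$, $a_{w_{i'}, i'} \neq 0$ and the surjectivity of $A$ one is driven to an isomorphism of model spaces that contradicts the indecomposability of $(V_{n_i}, R_{\varphi_{n_i}})$. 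Assertion (4) follows by applying (1)--(3) to $A^{-1} \in G_R$, which furnishes a two-sided inverse for $\sigma$. For (5), intersecting the inclusions from (2) over all $j$ with $n_j \neq n_i$ and using that $\sigma$ is now a permutation gives $f_i \in \bigcap_{n_j \neq n_i} \mathrm{Span}(\cup_{p \neq \sigma(n_j)} \beta_p) = \mathrm{Span}(\beta_{\sigma(n_i)})$. Finally (6) is immediate from (5): $A$ sends each basis vector of $V_i$ into $V_{\sigma(i)}$, so $A(V_i) \subseteq V_{\sigma(i)}$, with equality by invertibility of $A$ and bijectivity of $\sigma$.

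The main obstacle will be the technical extraction in (1). In the Riemannian (positive-definite) setting each summand of the displayed expansion is a Gram determinant, hence nonnegative by Cauchy--Schwarz, so that the vanishing of the total immediately forces every summand and every $\psi_p$-Cauchy--Schwarz gap to vanish, from which (1) follows cleanly. In indefinite signature this monotonicity is lost, and one must combine several curvature identities simultaneously while carefully tracking signs, leveraging the selected nonzero entry $a_{w_i, i}$ and the full nondegeneracy of each $\varphi_p$ to isolate the vanishing claim rather than merely an aggregate relation.
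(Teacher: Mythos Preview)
Your plan has a genuine gap in assertion (1), and you have essentially flagged it yourself: in indefinite signature the ``Cauchy--Schwarz'' positivity disappears, and the all-$f$ identities $R(f_{i_1},f_{i_2},f_{i_3},f_{i_4})=R(e_{i_1},e_{i_2},e_{i_3},e_{i_4})$ expand into sums of products of four matrix entries with mixed signs. Nothing in your outline explains how to isolate the vanishing of a single block of coefficients from these aggregate polynomial relations, and the phrase ``combine several curvature identities simultaneously while carefully tracking signs'' is not a proof. As written, (1) is established only when every $\varphi_p$ is definite.

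The paper avoids this difficulty entirely by evaluating $R$ on \emph{mixed} arguments, two $f$'s and two $e$'s. For $s\neq w$ with $n_s=n_w$, Lemma~\ref{preA}(3) gives
\[
R(f_r,e_s,f_t,e_w)=\sum_{a,b}a_{ar}a_{bt}\,R(e_a,e_s,e_b,e_w)=\pm\,a_{wr}\,a_{st},
\]
a single product rather than a sum. On the other hand, if $n_r\neq n_t$ then writing $e_s=A\tilde x$, $e_w=A\tilde y$ and using $A^*R=R$ together with Proposition~\ref{Rbias} gives $R(f_r,e_s,f_t,e_w)=R(e_r,\tilde x,e_t,\tilde y)=0$. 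Thus $a_{wr}a_{st}=0$ whenever $s\neq w$, $n_s=n_w$, and $n_r\neq n_t$, with no signature hypothesis. Choosing $w=w_i$ with $a_{w_i,i}\neq 0$ and $r=i$ immediately kills $a_{st}$ for $s\neq w_i$, and a short nonsingularity argument handles $s=w_i$. This two-$f$/two-$e$ trick is the missing idea.

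A secondary issue: your argument for (3) is underspecified. Since you allow $w_i$ to be \emph{any} index with $a_{w_i,i}\neq 0$, the failure of well-definedness could be an artefact of your choices rather than a genuine obstruction, and your appeal to ``an isomorphism of model spaces that contradicts indecomposability'' is not spelled out (indeed, at that stage you do not know $A(V_{n_i})$ lands in $V_{n_{w_i}}\oplus V_{n_{w_{i'}}}$, only that its projection there is surjective). The paper instead first checks directly from (1) that the map $p\mapsto n_{w_{a_p}}$ on block labels is injective, hence bijective, and then uses (2) to force $n_{w_q}=n_{w_{a_p}}$ whenever $n_q=p$. Your arguments for (2), (4), (5), and (6) are fine and close in spirit to the paper's, once (1) and (3) are in hand.
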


\begin{proof}  
Note that the hypotheses forbid $\dim V_i <2$, since otherwise $\ker R \neq 0$. Since $A$ is nonsingular, $A\beta = \{f_1, \ldots, f_N\}$ is a new basis for $V$.  Since $A \in G_R$, we must have 
$$
R(e_i, e_j, e_j, e_i) = A^*R(e_i, e_j, e_j, e_i) = R(f_i, f_j, f_j, f_i) = \varepsilon_{ij},
$$
where 
$$\varepsilon_{ij} = \left\{  \begin{array}{c l}
\pm1 & {\rm\ if\ } n_i = n_j \\
0 &  {\rm\ if\ } n_i \neq n_j
\end{array}
\right..$$
By Lemma \ref{preA}, for $s \neq w$ and $n_s = n_w$,
\begin{equation} \label{a}
R(f_r, e_s, f_t, e_w) = \pm a_{wr}a_{st},
\end{equation}
and if additionally, $n_r \neq n_t$, then by assumption $R(f_r, x, f_t, y) = A^*R(e_r, \tilde x, e_t, \tilde y) = 0$, where $\tilde x = A^{-1} x,$ and $\tilde y = A^{-1} y$.  So we conclude that if $s \neq w$, $n_s = n_w$, and $n_r \neq n_t$, then $R(f_r, e_s, f_t, e_w) = a_{wr}a_{st} = 0$.

We are now ready to establish Assertion (1).  Let $i\in \{1, \ldots, N\}$ be chosen.  Since $Ae_i = \sum a_{ji} e_j \neq 0$, there exists some smallest index $w$ so that $a_{wi} \neq 0$.  Set $w_i = w$; note that this assignment is well-defined since we choose the smallest such $w$.  Choose any index $t$ with $n_t \neq n_i$.  According to Equation (\ref{a}) we conclude that $R(f_i, e_s, f_t, e_w) = \pm a_{wi}a_{st} = 0$ so that $a_{st} = 0$ when $s \neq w$ and $n_s = n_w$.  We show presently that $a_{wt} = 0$ for all $t$ with $n_t \neq n_i$, completing the proof of Assertion (1).   

If there is a $t$ with $n_t \neq n_i$ with $a_{wt} \neq 0$, then for any $a_{cb}$ with $n_b = n_i$ and $c \neq w$, we have $R(f_t, e_c, f_b, e_w) =  \pm a_{wt}a_{cb} = 0.$  But now $a_{cb} = 0$ for all $b$, since we have already considered the case $n_b \neq n_i$ (for $b = t$ above).  It follows that $e_b \notin {\rm Span}\{f_1, \ldots, f_N\}$, a contradiction.

We now prove Assertion (2) as a straightforward consequence of Assertion (1).   Fix $i$, find $w_i$ according to Assertion (1), and choose any $t$ with $n_i \neq n_t$.  We have $f_t = Ae_t =  \sum_j a_{jt}e_j$.  By Assertion (1), for every $j$ with $n_j = n_{w_i}$, we have $a_{jt} = 0$, so that any vectors $e_j$ with $n_j = n_{w_i}$ do not appear in the sum $\sum_j a_{jt}e_j = f_t$.  Assertion (2) now follows.

We begin our proof of Assertion (3) with the following observation.   According to Assertion (1), we have that if $i \neq j$, then $n_{w_{a_i}} \neq n_{w_{a_j}}$, so that the function $i \mapsto n_{w_{a_i}}$ is injective, and hence bijective.  We presently show that $n_i \mapsto n_{w_i}$ is well-defined.

We suppose to the contrary that it is not.  This is equivalent to the assertion that there exists indices $q$ and $p$ with $n_{q} = p = n_{a_{p}}$, but $n_{w_q} \neq n_{w_{a_p}}$.  Since $i \mapsto n_{w_{a_i}}$ is surjective, there exists an index $r$ with $n_{w_{a_r}} = n_{w_q}$ and $a_r \neq a_p$.  So $n_{a_r} = r \neq p = n_{a_p} = n_q$, and Assertion (2) forces $f_q \in {\rm Span}(\cup_{j \neq n_{w_q}} \beta_j)$; this is a contradiction by the definition of $w_q$.

Assertion (4) follows easily, since one may freely choose to evaluate $\sigma$ using $a_p$ instead of any other $i$ with $n_i = p$, and thus $\sigma$ agrees with the bijection $n_i = p \mapsto n_{w_{a_p}} = n_{w_i}$.

We now prove Assertion (5).  Choose any index $i$.  By Assertion (4), for every $\ell$ with $n_\ell \neq n_{w_i}$ there exists an $a_p$ so that $n_{w_{a_p}} = n_\ell$, and $n_{a_p} \neq n_i$, and so Assertion (1) (using the index $a_p \in \{1, \ldots, N\}$) forces the coefficients $a_{i\ell}= 0$ for $n_\ell \neq n_{w_i}$, which shows $Ae_i \in {\rm Span}(\beta_{n_{w_i}})$.  Assertion (5) follows since, by definition, $n_{w_i} = \sigma(n_i)$.  Assertion (6) follows from Assertion (5).

 \end{proof}

Since the proof we give of Lemma \ref{technical} above is somewhat dense, we supply a representative example to illustrate our method of proof.

\begin{example}{\rm 
  Suppose that $k = 3$, and each $V_i$ is of dimension 2, so that $V = V_1 \oplus V_2 \oplus V_3$ has dimension 6.  Once the basis  $\beta$ is found as in the statement of Lemma \ref{technical}, we may express $A$ in terms of this basis, so that the $i$th column of $A$ are the components of the vector $f_i$ with respect to the basis $\beta$.  One divides the matrix up into pieces, according to the numbers $n_i$.  Since $f_1 = \sum_{j = 1}^6 a_{j1}e_j$, one of these $a_{j1} \neq 0$.  Suppose the smallest $w$ so that $a_{w1} \neq 0$ is when $w = 4$.  So we have $w_1 = 4$, so that $n_{w_1} = 2$.  This  means that $a_{11} = a_{21} = a_{31} = 0$, and $a_{41} \neq 0$.  The method of  proof of Assertion (1) is to first match the known nonzero entry $a_{41}$ with $a_{33}, a_{34}, a_{35},$ and $a_{36}$.  These are the $a_{st}$ with $n_t \neq n_1 = 1$, and $s$ with $n_s = n_{w_i} = 2$, but $s = 3 \neq 4 = w_1$.  Equation (\ref{a}) shows that these $a_{st} = 0$:
$$
A = \left[
\begin{array}{c c|c c|c c}
a_{11}&a_{12}&a_{13}&a_{14}&a_{15}&a_{16}\\
a_{21}&a_{22}&a_{23}&a_{24}&a_{25}&a_{26}\\ \hline
a_{31}&a_{32}&a_{33}&a_{34}&a_{35}&a_{36}\\
a_{41}&a_{42}&a_{43}&a_{44}&a_{45}&a_{46}\\ \hline
a_{51}&a_{52}&a_{53}&a_{54}&a_{55}&a_{56}\\
a_{61}&a_{62}&a_{63}&a_{64}&a_{65}&a_{66}\\ 
\end{array}
\right]  =
\left[
\begin{array}{c c|c c|c c}
0&a_{12}&a_{13}&a_{14}&a_{15}&a_{16}\\
0&a_{22}&a_{23}&a_{24}&a_{25}&a_{26}\\ \hline
0&a_{32}&0&0&0&0\\
a_{41}&a_{42}&a_{43}&a_{44}&a_{45}&a_{46}\\ \hline
a_{51}&a_{52}&a_{53}&a_{54}&a_{55}&a_{56}\\
a_{61}&a_{62}&a_{63}&a_{64}&a_{65}&a_{66}\\ 
\end{array}
\right] .
$$
If any of $a_{43}, a_{44}, a_{45}$ or $a_{46}$ are nonzero (now $s = 4 = w_1$), then we could conclude using Equation (\ref{a}) that $a_{32} = 0$ as well, in which case $A$ is singular: a contradiction.  So we must have 
$$A = 
\left[
\begin{array}{c c|c c|c c}
0&a_{12}&a_{13}&a_{14}&a_{15}&a_{16}\\
0&a_{22}&a_{23}&a_{24}&a_{25}&a_{26}\\ \hline
0&a_{32}&0&0&0&0\\
a_{41}&a_{42}&a_{43}&a_{44}&a_{45}&a_{46}\\ \hline
a_{51}&a_{52}&a_{53}&a_{54}&a_{55}&a_{56}\\
a_{61}&a_{62}&a_{63}&a_{64}&a_{65}&a_{66}\\ 
\end{array}
\right]
=
\left[
\begin{array}{c c|c c|c c}
0&a_{12}&a_{13}&a_{14}&a_{15}&a_{16}\\
0&a_{22}&a_{23}&a_{24}&a_{25}&a_{26}\\ \hline
0&a_{32}&0&0&0&0\\
a_{41}&a_{42}&0&0&0&0\\ \hline
a_{51}&a_{52}&a_{53}&a_{54}&a_{55}&a_{56}\\
a_{61}&a_{62}&a_{63}&a_{64}&a_{65}&a_{66}\\ 
\end{array}
\right].
$$
One can see now that $Ae_i$ is not in the span of $e_3$ and $e_4$ for $i = 3, 4, 5, 6$, the conclusion of Assertion (2).

To continue our example, we study $f_3 = Ae_3$.  Suppose $a_{13} = a_{23} = 0$, and $a_{53} \neq 0$, so that $w_3 = 5$, $n_{w_3} = 3$, and $Ae_3 = a_{53} e_5 + a_{63}e_6$.    Applying Assertion (1) to the lower left and lower right elements of $A$, we have 
$$
A = 
\left[
\begin{array}{c c|c c|c c}
0&a_{12}&a_{13}&a_{14}&a_{15}&a_{16}\\
0&a_{22}&a_{23}&a_{24}&a_{25}&a_{26}\\ \hline
0&a_{32}&0&0&0&0\\
a_{41}&a_{42}&0&0&0&0\\ \hline
a_{51}&a_{52}&a_{53}&a_{54}&a_{55}&a_{56}\\
a_{61}&a_{62}&a_{63}&a_{64}&a_{65}&a_{66}\\ 
\end{array}
\right] = 
\left[
\begin{array}{c c|c c|c c}
0&a_{12}&0&a_{14}&a_{15}&a_{16}\\
0&a_{22}&0&a_{24}&a_{25}&a_{26}\\ \hline
0&a_{32}&0&0&0&0\\
a_{41}&a_{42}&0&0&0&0\\ \hline
0&0&a_{53}&a_{54}&0&0\\
0&0&a_{63}&a_{64}&0&0\\ 
\end{array}
\right].
$$
One sees again that Assertion (2) holds, since $Ae_i$ is not in the span of $e_5$ and $e_6$ for $i = 1, 2, 5, 6$.

Finally, it is now clear that either $a_{15}$ or $a_{25}$ is nonzero, so that $n_{w_5} = 1$.  A final application of Assertion (1) shows that the upper left and upper center elements of $A$ are all zero:
$$A = 
\left[
\begin{array}{c c|c c|c c}
0&a_{12}&0&a_{14}&a_{15}&a_{16}\\
0&a_{22}&0&a_{24}&a_{25}&a_{26}\\ \hline
0&a_{32}&0&0&0&0\\
a_{41}&a_{42}&0&0&0&0\\ \hline
0&0&a_{53}&a_{54}&0&0\\
0&0&a_{63}&a_{64}&0&0\\ 
\end{array}
\right]
= 
\left[
\begin{array}{c c|c c|c c}
0&0&0&0&a_{15}&a_{16}\\
0&0&0&0&a_{25}&a_{26}\\ \hline
0&a_{32}&0&0&0&0\\
a_{41}&a_{42}&0&0&0&0\\ \hline
0&0&a_{53}&a_{54}&0&0\\
0&0&a_{63}&a_{64}&0&0\\ 
\end{array}
\right].
$$
Once one has done this analysis, the remaining assertions are easy to see.  The assignment $p \mapsto n_{w_{a_p}}$ is bijective, since every one of the basis vectors $e_1 = e_{a_1}, e_3 = e_{a_2},$ and $e_5 = e_{a_3}$ clearly must satisfy $\{1, 2, 3\} = \{n_{w_1}, n_{w_3}, n_{w_5}\}$.  That $n_{w_2} = n_{w_1}$ also follows from this fact:  if this were not the case, $n_{w_2}$ would be equal to $n_{w_3}$ or $n_{w_5}$, and as is evident in our matrix above (as a result of Assertion (1)), this is not possible.   Similarly for $n_{w_4} = n_{w_3}$, and $n_{w_6} = n_{w_5}$.  So $\sigma(n_i) = n_{w_i}$ is well-defined, and we may freely choose to evaluate $\sigma(n_i) = \sigma(n_{a_p})$ when $n_i = p = n_{a_p}$, and this function is bijective since $\{1, 2, 3\} = \{n_{w_1}, n_{w_3}, n_{w_5}\}$.  In fact, one can see from our example that $\sigma = (123)$, and $A:V_i \to V_{\sigma(i)}$.  \hfill $\qedbox$
}
\end{example}

Theorem \ref{multiphi} follows directly from Lemma \ref{technical}.  We remark in passing that our method of proof will carry over to any $\alpha \in \otimes^\ell V^*$ that satisfies the relations found in Assertion (3) of Lemma \ref{preA}.  It may be possible to apply this methodology to other contravariant tensors to obtain similar results.  Specifically, there is another construction of algebraic curvature tensors using antisymmetric 2-forms \cite{G01, G07}, and our method of proof of Lemma \ref{technical} and Theorem \ref{multiphi} apply equally well in that circumstance as well.

\section{Corollaries of Theorem \ref{multiphi}}  \label{section4}

This section is devoted entirely to establishing the corollaries of Theorem \ref{multiphi}.  We begin by establishing Corollary \ref{directproduct}.

\begin{proof}[Proof of Corollary \ref{directproduct}.]
Let $\beta_i$ be a basis for $V_i$, and let  $\beta = \beta_1 \cup \beta_2$ be an ordered basis for $V$.  Then the hypothesis that each $V_i$ is $g-$invariant for all $g \in G_R$ implies that, given any element $g \in G_R$, there exists matrices $g_1$ and $g_2$ so that 
\begin{equation}  \label{gstuff}
[g]_\beta = \left[
\begin{array}{c c}
g_1 & 0 \\
0 & g_2
\end{array}
\right].
\end{equation}
On this basis, let $j_i: G_{R_i} \to G_{R}$ be the inclusion of $G_{R_i}$ into $G_R$ defined as follows:
\begin{equation}  \label{inclusion}
j_1(g_1)=  \left[
\begin{array}{c c}
g_1 & 0 \\
0 & I
\end{array}
\right], \quad{\rm and} \quad j_2(g_2) = 
 \left[
\begin{array}{c c}
I & 0 \\
0 & g_2
\end{array}
\right]
\end{equation}
 It follows that $G_{R_i} \cong j_i(G_{R_i})$.  We complete the proof by showing that $G_R$ is the internal direct product of $j_1(G_{R_1})\cong G_{R_1}$ and $j_2(G_{R_2}) \cong G_{R_2}$.

First, we note that $j_1(G_{R_1}) \cap j_2(G_{R_2})$ is trivial according to Equations (\ref{gstuff}) and (\ref{inclusion}).  Also note that for every $g \in G_R$, we have 
$$
[g]_\beta = \left[
\begin{array}{c c}
g_1 & 0 \\
0 & g_2
\end{array}
\right] = j_1(g_1) j_2(g_2),
$$
so that $G_R = j_1(G_{R_1}) j_2(G_{R_2})$.  Finally, if $A \in j_1(G_{R_1})$ and $g \in G_R$, then expressing the conjugate of $A$ by $g$ as a matrix shows that $g^{-1}Ag \in G_R$ leaves $V_1$ invariant and is the identity on $V_2$.  Thus for any $x, y, z, w \in V_1$, we have
$$
\begin{array}  {r c l}
(g^{-1}Ag)^* R_1(x, y, z, w) & = & R_1((g^{-1}Ag)x, (g^{-1}Ag)y, (g^{-1}Ag)z, (g^{-1}Ag)w) \\
& = & R((g^{-1}Ag)x, (g^{-1}Ag)y, (g^{-1}Ag)z, (g^{-1}Ag)w) \\
& = & (g^{-1}Ag)^* R(x, y, z, w) \\
& = & R(x, y, z, w) \\
& = & R_1(x, y, z, w).
\end{array}
$$
So $g^{-1}Ag \in G_{R_1}$, and $G_{R_1} \unlhd G_R$.  Similarly for $G_{R_2}$.  We conclude $G_R = j_1(G_{R_1}) \times j_2(G_{R_2}) \cong G_{R_1} \times G_{R_2}$. 
\end{proof}

The proof of Corollary \ref{dimensions} follows mostly from basic linear algebraic observations:

\begin{proof}[Proof of Corollary \ref{dimensions}.]  To prove Assertion (1), suppose $\dim V_i \neq \dim V_j$, and there exists an $A \in G_R$ with $A: V_i \to V_j$.  Since $A^{-1} \in G_R$ as well and $A^{-1}:V_j \to V_i$, without loss of generality we may assume $\dim(V_i) > \dim(V_j)$ in which case $A|_{V_i}: V_i \to V_j$ must not have full rank:  a contradiction.  

We now prove Assertion (2).   Suppose $x_i, y_i, z_i, w_i \in V_i$.  Then $Ax_i, Ay_i, Az_i,$ and $Aw_i \in V_j$, and 
$$
\begin{array}{r c l}
R(x_i, y_i, z_i, w_i) & = & R_{\varphi_i}(x_i, y_i, z_i, w_i), \\
R(x_i, y_i, z_i, w_i) & = & A^* R(x_i, y_i, z_i, w_i) \\ 
& = & R(Ax_i, Ay_i, Az_i, Aw_i) \\
 & = & R_{\varphi_j}(Ax_i, Ay_i, Az_i, Aw_i) \\
 & = & R_{A^*\varphi_j}(x_i, y_i, z_i, w_i).
\end{array}
$$
If ${\rm Rank}(\varphi_i) \geq 3$, then $\varphi_i = \pm A^*\varphi_j$ (see \cite{G07}), and the result follows in that case.  According to our assumptions and Assertion (1),  if ${\rm Rank}(\varphi_i) = 2$ we know $\dim V_i = \dim V_j = {\rm Rank}(\varphi_j) = 2$.  If  $\{x, y\}$ is a basis for $V_i$, then the following equation must hold:
$$
R_{\varphi_i}(x, y, y, x) = R_{\varphi_j}(Ax, Ay, Ay, Ax) = (\det A)^2 R_{\varphi_j}(x, y, y, x).
$$
We note that in dimension 2, the signature of $\varphi_i$ is determined by the sign of $R_{\varphi_i}(x, y, y, x) = \varphi_i(x, x)\varphi_i(y, y) - \varphi_i(x, y)^2$.  Thus, $R_{\varphi_i}(x, y, y, x)$ has the same sign as $R_{\varphi_j}(x, y, y, x)$ if and only if the conclusion of  Assertion (2) holds.

To prove the final assertion,  we find bases $\beta_s = \{e^s_1, \ldots, e^s_{\dim(V_s)}\}$ for $V_s$ which are orthonormal with respect to $\varphi_s$.  We define $Be^s_t = e^s_t$ for $s \neq i, j, Be^i_t = e^j_t,$ and $Be^j_t = e^i_t$.  In addition, by denoting $B|_{V_i}$ simply as $B_i$, we have
$$
\begin{array}{r c l}
R_{\varphi_i}(e^i_{t_1}, e^i_{t_2}, e^i_{t_3}, e^i_{t_4}) & = & R(e^i_{t_1}, e^i_{t_2}, e^i_{t_3}, e^i_{t_4}) \\
& = & B^* R(e^i_{t_1}, e^i_{t_2}, e^i_{t_3}, e^i_{t_4}) \\
 & = & R(e^j_{t_1}, e^j_{t_2}, e^j_{t_3}, e^j_{t_4}) \\
 & = & R_{\varphi_j}(e^j_{t_1}, e^j_{t_2}, e^j_{t_3}, e^j_{t_4}) \\
 & = & B_i^*R_{\varphi_j}(e^i_{t_1}, e^i_{t_2}, e^i_{t_3}, e^i_{t_4}).
\end{array}
$$
Thus, $R_{\varphi_i} = B_i^* R_{\varphi_j}$ as desired.
\end{proof}

Before establishing Corollaries \ref{wreath} and \ref{morewreath}, we pause to review the wreath product as a group theoretic construction (see page 172 in \cite{R99}).  Let $G$ be any group.  The symmetric group $S_k$ acts on  $\Pi_{i = 1}^k G$  (the direct product of $k$ copies of $G$)  by permuting the indices:  $\theta(\sigma) (g_1, \ldots, g_k) = (g_{\sigma^{-1}(1)}, \ldots, g_{\sigma^{-1}(k)})$.  Thus, $\theta: S_k \to Aut(\Pi_{i = 1}^k G)$ is a homomorphism.  The \emph{wreath product $G \wr S_k$ of $G$ by $S_k$} is the semidirect product 
$$G \wr S_k := (\Pi_{i = 1}^k G) \rtimes_\theta S_k$$
and accordingly has the binary operation given by 
$$
(h_1, \ldots, h_k; \tau) (g_1, \ldots, g_k; \sigma) = (h_1g_{\tau^{-1}(1)}, \ldots, h_kg_{\tau^{-1}(k)}; \tau\sigma).
$$

\begin{proof}[Proof of Corollary \ref{wreath}.]  For simplicity, denote $G_{R_{\varphi}}$ as $G$.  Define $\Phi: (G \wr S_k) \to G_R$ on the vector $(v_1, \ldots, v_k) \in V_1 \oplus \cdots \oplus V_k$ as
$$
\Phi(g_1, \ldots, g_k; \sigma)(v_1, \ldots, v_k) = (g_1v_{\sigma^{-1}(1)}, \ldots, g_kv_{\sigma^{-1}(k)}).
$$
One observes that $\Phi(g_1, \ldots, g_k; \sigma) \in G_R$.  In addition, if $(g_1, \ldots, g_k; \sigma) \in \ker \Phi$, then we must have $g_iv_{\sigma^{-1}(i)} = v_i$ for all $v_i \in V_i$, so that $\sigma$ is the identity, and $g_i$ is the identity for all $i$.  So $\Phi$ is injective. If $A \in G_R$, then there exists a $\sigma \in S_k$ as in Theorem \ref{multiphi} with $A: V_i \to V_{\sigma(i)}$.  If we denote $A_i = A|_{V_i} \in G$ and set $g_i = A_{\sigma^{-1}(i)}$, then one has $\Phi(g_1, \ldots, g_k; \sigma) = A$.  We show $\Phi$ preserves the wreath product structure to complete the proof that $\Phi$ is an isomorphism:
$$
\Phi(h_1, \ldots, h_k; \tau)\Phi(g_1, \ldots, g_k;\sigma)(v_1, \ldots, v_k)
$$
$$
\begin{array}{r c l} & = & \Phi(h_1, \ldots, h_k; \tau)(g_1v_{\sigma^{-1}(1)}, \ldots, g_kv_{\sigma^{-1}(k)}) \\
& = & (h_1g_{\tau^{-1}(1)}v_{\sigma^{-1}\tau^{-1}(1)}, \ldots, h_kg_{\tau^{-1}(k)}v_{\sigma^{-1}\tau^{-1}(k)}) \\
& = & (h_1g_{\tau^{-1}(1)}v_{(\tau\sigma)^{-1}(1)}, \ldots, h_kg_{\tau^{-1}(k)}v_{(\tau\sigma)^{-1}(k)}) \\
& = & \Phi(h_1g_{\tau^{-1}(1)}, \ldots h_k g_{\tau^{-1}(k)}; \tau\sigma)(v_1, \ldots, v_k) \\
& = & \Phi[(h_1, \ldots, h_k;\tau)(g_1, \ldots, g_k;\sigma)](v_1, \ldots, v_k).
\end{array}$$
\end{proof}

\begin{proof}[Proof of Corollary \ref{morewreath}.]  Set $V_p = \oplus_{i = 1}^{k_p}W_p$.  Since $\dim(W_1) \neq \dim(W_2)$, we have that each $V_p$ are invariant.  Using Corollary \ref{directproduct}, we identify the structure group $G_R$ as the direct product of the structure groups $G_{R_1} \times G_{R_2}$.  Using Corollary \ref{wreath}, we identify each $G_{R_p} \cong G_{\varphi_p} \wr Sym_{k_p}$.  The result follows.
\end{proof}

\section{Invariants, and Applications to Curvature Homogeneous Manifolds}  \label{section5}

In this section, we prove a general result about the isometry invariants of the decomposible model space $\oplus_i (V_i, R_{\varphi_i}, A_i^1, \ldots, A_i^s)$ through an understanding of $G_{R_{\varphi_i}}$, and give two examples of how our results apply to curvature homogeneous manifolds.  The first application classifies the structure group of a curvature homogeneous Skew-Tsankov Riemannian manifold as being the same model considered in Corollary \ref{morewreath}, and the second application exhibits a realization of the model space studied in Corollary \ref{dimensions} as a curvature homogeneous manifold and uses the result of Corollary \ref{dimensions} and Theorem \ref{invariants} to construct an isometry invariant not of Weyl type.  We begin by putting the theory of invariants of model spaces in context.

Let $(M,g)$ be a pseudo-Riemannian manifold.  At each point, there is a model space $\mathfrak{M}_P := (T_PM, g|_P, R|_P, \nabla R|_P, \ldots, \nabla^kR|_P)$, where $\nabla^iR$ are the covariant derivatives of the Riemann curvature tensor $R$.  If $\psi$ is a local isometry of $M$, and $\psi(P) = Q$, then there is a model space isomorphism from $\mathfrak{M}_Q$ to $\mathfrak{M}_P$.  Thus, if $\alpha(x)$ is an invariant of $\mathfrak{M}_x$, it must be that $\alpha(P) = \alpha(Q)$.  In this way, if one can find a model space invariant $\alpha(P)$ at each $P \in M$, if $\alpha$ is nonconstant then $(M,g)$ is not locally homogeneous.  As mentioned in the introduction, the Weyl scalar invariants are enough invariants to determine if a Riemannian manifold is locally homogeneous \cite{PTV96}, but not always enough to determine local homogeneity in the pseudo-Riemannian setting.

\begin{proof}[Proof of Theorem \ref{invariants}.]  Let $F(\alpha_1, \ldots, \alpha_s)$ be a symmetric function of the $\alpha_i$.  Let $G$ be the structure group of the model $\oplus_{i = 1}^s(V_i, R_{\varphi_i})$, and let $G_i$ be the structure group of $(V_i, R_{\varphi_i})$.  Let $\beta_i$ be an arbitrary basis for $V_i$, and construct $\beta = \cup_{i = 1}^s \beta_i$ as a basis for $V$.  Use the basis $\beta$ to evaluate the function $F(\alpha_1, \ldots, \alpha_s)$, and denote this quantity as $F(\alpha_1, \ldots, \alpha_s)(\beta) = F(\alpha_1(\beta_1), \ldots, \alpha_s(\beta_s))$.  We must show that if $g\cdot \beta$ is the new basis formed by applying $g\in G$ to each entry of $\beta$, then $F(\alpha_1, \ldots, \alpha_s)(\beta) = F(\alpha_1, \ldots, \alpha_s)(g\cdot \beta)$.

Let $g \in G$ be arbitary.  There exists a permutation $\sigma\in S_k$ that is determined by $g$ as found in Theorem \ref{multiphi}.  According to Theorem \ref{multiphi}, $g|_{V_i}\cdot\beta_i$ is a basis for $V_j$, where $j = \sigma(i)$.  Thus, according to the last part of Assertion (3) of Corollary \ref{dimensions}, and since $\alpha_j$ is an invariant of the model $(V_j, R_{\varphi_j}, A_i^1, \ldots, A_i^s)$, $\alpha_j(g|_{V_i}\cdot \beta_i) = \alpha_i(\beta_i)$.  Thus, the evaluation of $F$ on the basis $g\cdot \beta$ is, since $F$ is symmetric in the inputs $\alpha_i$, 
$$\begin{array}{r c l}
F(\alpha_1, \ldots, \alpha_s)(g\cdot \beta) &=& F(\alpha_1(g|_{V_{\sigma^{-1}(1)}} \cdot \beta_{\sigma^{-1}(1)}), \ldots, \alpha_s(g|_{V_{\sigma^{-1}(s)}} \cdot \beta_{\sigma^{-1}(s)})) \\
&=& 
F(\alpha_{\sigma^{-1}(1)}(\beta_{\sigma^{-1}(1)}), \ldots, \alpha_{\sigma^{-1}(s)}(\beta_{\sigma^{-1}(s)})) \\
&=& F(\alpha_1, \ldots, \alpha_s)(\beta).
\end{array}
$$ 
Thus, $F$ is an invariant of $\mathfrak{M}$. 
\end{proof}

\subsection{Skew-Tsankov manifolds}

We use the following definition from \cite{BFG07}.

\begin{definition}  \label{skewtsankov}
Let $\mathfrak{M} = (V, \varphi, R)$ be a model space, where $\varphi\in S^2(V)$ is nondegenerate, and $R \in \mathcal{A}(V)$.  Let $\mathcal{R}$ be the skew-symmetric curvature operator characterized by the equality $R(x,y,z,w) = \varphi(\mathcal{R}(x,y)z, w)$.  The model space $\mathfrak{M}$ is \emph{skew-Tsankov} if $\mathcal{R}(x,y)\mathcal{R}(z,w) = \mathcal{R}(z,w) \mathcal{R}(x,y)$ for all $x, y, z, w \in V$.  A pseudo-Riemannian manifold $(M,g)$ is \emph{skew-Tsankov} if the model spaces $\mathfrak{M}_P = (T_PM, g|_P, R)$ are skew-Tsankov for all $P$.
\end{definition}
Thus, if $(M,g)$ is curvature homogeneous, the algebraic and geometric versions of skew-Tsankov coincide, in a sense.  Skew-Tsankov models $(V, \varphi, R)$ are completely classified in the event that $\varphi$ is positive (or negative) definite \cite{BFG07}, although the classification question remains open for more general signatures for $\varphi$.

\begin{theorem}[\cite{BFG07}] \label{tsankov}
 If $\mathfrak{M}$ is a Riemannian skew-Tsankov model, then $\mathfrak{M} = \oplus_{i=1}^s \mathfrak{M}_i \oplus \mathfrak{K}$, where $\mathfrak{K} = (U, \varphi_U, 0)$, and $\mathfrak{M_i} = (V_i, \varphi_i, R_i)$, where $\dim(V_i) = 2$ and $\varphi_U$, $\varphi_i$ are positive definite.
\end{theorem}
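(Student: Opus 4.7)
The plan is to leverage the fact that in the Riemannian setting the operators $\mathcal{R}(x,y)$ form a commuting family of skew-adjoint endomorphisms of $(V,\varphi)$. Positive-definiteness of $\varphi$ together with the antisymmetry $R(x,y,z,w)=-R(x,y,w,z)$ gives $\varphi(\mathcal{R}(x,y)z,w)=-\varphi(z,\mathcal{R}(x,y)w)$, so each $\mathcal{R}(x,y)$ is $\varphi$-skew-adjoint, and the skew-Tsankov hypothesis is then exactly the assertion that the family $\mathcal{F}=\{\mathcal{R}(x,y):x,y\in V\}$ is commuting.

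Next I would invoke the real spectral theorem for a commuting family of skew-adjoint operators on a finite-dimensional real inner product space, obtaining a $\varphi$-orthogonal decomposition $V=U\oplus V_1\oplus\cdots\oplus V_s$, where $U$ is the common kernel of $\mathcal{F}$ and each $V_i$ is a two-dimensional common invariant subspace on which every $\mathcal{R}(x,y)$ restricts to a real scalar multiple of a fixed nonzero skew-adjoint operator. The cleanest route is to complexify $(V,\varphi)$, note that $\{i\mathcal{R}(x,y)\}$ becomes a commuting family of Hermitian operators and is therefore simultaneously unitarily diagonalizable, pair complex-conjugate common eigenlines to recover real $2$-planes, and, when a character appears with multiplicity greater than one, choose any $\varphi$-orthogonal splitting of the corresponding isotypic subspace into $2$-planes.

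I would then translate this orthogonal decomposition into a model-space decomposition. For $U$: if $u\in U$ then $R(x,y,u,z)=\varphi(\mathcal{R}(x,y)u,z)=0$ for all $x,y,z$, so $u\in\ker R$ by Proposition \ref{kerbias}; the reverse inclusion is symmetric, giving $U=\ker R$ and hence $R|_U=0$. For the cross-terms, given $v_i\in V_i$, $v_j\in V_j$ with $i\neq j$, and $z,z'\in V$, the pair-swap symmetry yields $R(v_i,v_j,z,z')=R(z,z',v_i,v_j)=\varphi(\mathcal{R}(z,z')v_i,v_j)$, and invariance of $V_i$ under $\mathcal{R}(z,z')$ places the first factor in $V_i$, which is $\varphi$-orthogonal to $V_j$, forcing the expression to vanish. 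Hence $V_i\perp_R V_j$ and $V_i\perp_R U$, and setting $\varphi_U=\varphi|_U$, $\varphi_i=\varphi|_{V_i}$ (both positive definite because $\varphi$ is), and $R_i=R|_{V_i}$ produces the asserted splitting $\mathfrak{M}=\bigoplus_{i=1}^{s}\mathfrak{M}_i\oplus\mathfrak{K}$.

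The main obstacle is the simultaneous block-diagonalization step: the single-operator version is classical, but the extension to a commuting family, together with the careful bookkeeping needed to split characters of multiplicity greater than one into genuine $2$-dimensional common invariant pieces rather than larger indecomposable blocks, is where all the real work lives. Once that is settled, the remainder of the argument is essentially formal and uses only the curvature symmetries of Equation (\ref{R}); notably, the first Bianchi identity is not required.
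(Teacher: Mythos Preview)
The paper does not prove Theorem~\ref{tsankov}; it is quoted verbatim from \cite{BFG07} and used as a black box to feed into Theorem~\ref{skewmodel}. There is therefore no ``paper's own proof'' to compare against.

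That said, your outline is a correct proof sketch and is essentially the standard argument. A few comments. Your identification $U=\ker R$ is right: $v\in\ker R$ iff $R(x,y,v,z)=\varphi(\mathcal R(x,y)v,z)=0$ for all $x,y,z$, and nondegeneracy of $\varphi$ then forces $\mathcal R(x,y)v=0$ for all $x,y$. Your cross-term computation $R(v_i,v_j,z,z')=\varphi(\mathcal R(z,z')v_i,v_j)=0$ establishes $V_i\perp_R V_j$ in the sense of the paper's definition (vanishing when the first two slots lie in different summands), and Proposition~\ref{Rbias} then handles the remaining slot configurations, so you need not check those by hand. On the ``main obstacle'': once you are inside an isotypic component $W_\chi$ for a nonzero character $\chi$, every $\mathcal R(x,y)$ acts on $W_\chi$ as the scalar $\chi(\mathcal R(x,y))$ times a single fixed $\varphi$-orthogonal complex structure $J$, so any $\varphi$-orthogonal, $J$-invariant splitting of $W_\chi$ into $2$-planes (obtained by repeatedly taking $\operatorname{span}\{v,Jv\}$ and passing to the orthogonal complement) yields common invariant planes. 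That resolves the multiplicity issue you flagged and completes the argument.
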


If $R$ is an algebraic curvature tensor on a two-dimensional vector space, then it is easy to see \cite{DG04} that $R = R_{\varphi}$ for some symmetric $\varphi$.  In addition, by replacing $\varphi$ with $-\varphi$, such a $\varphi$ can be chosen to have signature $(0,2)$ or $(1,1)$.  Thus, any skew-Tsankov model is necessarily of the type considered in Corollary \ref{morewreath}, with the added consideration from Section \ref{section2} that $\ker(R) = U$.  The following Theorem follows immediately from Theorem \ref{invariants}.

\begin{theorem} \label{skewmodel}
Let $\mathfrak{M}$ be a Riemannian Skew-Tsankov model.  According to Theorem \ref{tsankov}, $\mathfrak{M}$ decomposes into $\oplus_{i=1}^s\mathfrak{M}_i\oplus \mathfrak{K}$ as in Theorem \ref{tsankov}.  Let $\kappa_i$ be the sectional curvature of $\mathfrak{M}_i$, and let $F(\kappa_1, \ldots, \kappa_s)$ be any symmetric function of the $\kappa_i$.  Then $F$ is an invariant of the model $\mathfrak{M}$.
\end{theorem}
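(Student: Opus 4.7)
My plan is to derive Theorem \ref{skewmodel} directly from Theorem \ref{invariants}, with a short bookkeeping step to absorb the kernel summand $\mathfrak{K}$. First I would unpack the decomposition supplied by Theorem \ref{tsankov}: each $\mathfrak{M}_i = (V_i, \varphi_i, R_i)$ has $\dim V_i = 2$, so the observation from \cite{DG04} recalled just above Theorem \ref{skewmodel} lets me write $R_i = R_{\varphi_i'}$ for some nondegenerate $\varphi_i' \in S^2(V_i)$. Consequently $\mathfrak{M}_i = (V_i, R_{\varphi_i'}, \varphi_i)$ has exactly the shape $(V_i, R_{\varphi_i}, A^i_1)$ required by the hypothesis of Theorem \ref{invariants}, with the metric $\varphi_i$ playing the role of the sole auxiliary tensor.

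Next I would verify that the sectional curvature $\kappa_i$ is an invariant of the submodel $\mathfrak{M}_i$ in the sense of Definition \ref{invariant}. Since $\dim V_i = 2$, for any basis $\{x,y\}$ of $V_i$ one has
$$
\kappa_i \;=\; \frac{R_i(x,y,y,x)}{\varphi_i(x,x)\,\varphi_i(y,y) - \varphi_i(x,y)^2},
$$
and any $A \in G_{\mathfrak{M}_i}$ preserves both $R_i$ and $\varphi_i$, hence leaves numerator and denominator alone and $\kappa_i$ unchanged. Writing $\bar{\mathfrak{M}} := \oplus_{i=1}^s \mathfrak{M}_i$, the nondegeneracy of each $\varphi_i'$ gives $\ker \bar R = 0$ via Lemma \ref{preA}(1), so Theorem \ref{invariants} applies on the nose and yields that $F(\kappa_1,\ldots,\kappa_s)$ is an invariant of $\bar{\mathfrak{M}}$.

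The final step is to promote invariance under $G_{\bar{\mathfrak{M}}}$ to invariance under $G_\mathfrak{M}$. The full curvature tensor of $\mathfrak{M}$ satisfies $\ker R = U$, so the block-triangular observation recalled in Section \ref{section2} forces every $A \in G_\mathfrak{M}$ to preserve $U$. Because $\varphi$ is positive definite, the orthogonal complement $U^\perp = V_1 \oplus \cdots \oplus V_s$ is also $A$-invariant, and $A|_{U^\perp}$ lies in $G_{\bar{\mathfrak{M}}}$. Since $F(\kappa_1,\ldots,\kappa_s)$ depends only on data supported on $U^\perp$, invariance under $G_{\bar{\mathfrak{M}}}$ is exactly invariance under $G_\mathfrak{M}$. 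The one place requiring any care is precisely this last restriction, which leans on Riemannian positivity of $\varphi$ to produce a $G_\mathfrak{M}$-invariant complement of $\ker R$; in higher signature no such clean splitting would be automatic. Everything else is a direct invocation of Theorems \ref{tsankov} and \ref{invariants}.
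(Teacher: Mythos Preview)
Your proposal is correct and follows the same approach as the paper, which simply says the result ``follows immediately from Theorem \ref{invariants}.'' Your write-up is more careful than the paper's one-line proof: you explicitly verify that the sectional curvature $\kappa_i$ is an invariant of each two-dimensional summand, and you spell out how to absorb the kernel summand $\mathfrak{K}$, which the paper only gestures at in the sentence preceding the theorem (``with the added consideration from Section \ref{section2} that $\ker(R) = U$'').

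One small remark on your final step: you invoke positive definiteness of $\varphi$ so that $A \in G_\mathfrak{M}$ preserves $U^\perp$. This is fine for invariance under $G_\mathfrak{M}$, but the structure group in the proof of Theorem \ref{invariants} is actually $G_R$, which need not preserve $\varphi$. The cleaner route---and the one the paper implicitly has in mind via Section \ref{section2}---is to pass to the quotient $\bar V = V/\ker R$: the block-triangular form of any $A \in G_R$ shows that $\bar A \in G_{\bar R}$, and since $F(\kappa_1,\ldots,\kappa_s)$ is computed entirely from $\bar R$ (the sectional curvatures live on the quotient), invariance under $G_{\bar R}$ from Theorem \ref{invariants} yields invariance under $G_R$ directly, without needing a $\varphi$-invariant complement. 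Your version still proves the theorem as stated, since $G_\mathfrak{M} \leq G_R$; the quotient argument just gives the slightly stronger invariance under $G_R$ and avoids the signature-dependent splitting you flag at the end.
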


\begin{remark}  {\rm 
In Theorem \ref{skewmodel}, each of the nontrivial direct summands $\mathfrak{M}_i$ have dimension 2.  Thus, the sectional curvature completely determines $R_i$ on $V_i$.
}\end{remark}

\subsection{Curvature homogeneous manifolds}

We use Theorem \ref{invariants} to construct invariants of direct sums of model spaces of the form $(V, R_\varphi)$.  These models have appeared in the literature frequently \cite{D00, DG05, DGS04, GIZ02, GIZ03} to study questions of curvature homogeneity and the Osserman conjecture in the higher signature setting, and we aim to present a family of curvature homogeneous manifolds relating to the decomposable model space $\oplus_{i = 1}^s(V_i, R_{\varphi_i})$, and to construct new isometry invariants for direct sums of these models.

Since, in the Riemannian setting, the Weyl scalar invariants will determine if a curvature homogenous space is locally homogeneous (and determined up to isometry by these invariants \cite{PTV96}), an example in the Riemannian setting is not a place one would actually use the results in this paper for that purpose.  Instead, we construct a family of balanced signature manifolds as orthogonal products of those mentioned in the previous paragraph, all of whose Weyl scalar invariants will vanish.  We follow \cite{DG05} and \cite{G07} for our definition below.

\begin{definition}  \label{pp}
Let $(x_1, \ldots, x_p, y_1, \ldots, y_p)$ be coordinates on $M = \mathbb{R}^{2p}$.  Let the indices $i, j$ vary from $1$ to $p$.  Let $f(x_1, \ldots, x_p)$ be a smooth real valued function of the $x_i$, and define a smooth metric $g$ on $M$ as having the following nonzero components:
$$
g(\partial_{x_i}, \partial_{x_j}) =\frac{\partial f}{\partial x_i} \cdot\frac{\partial f}{\partial x_j}, \quad g(\partial_{x_i}, \partial_{y_i}) = 1.
$$
Denote $\mathcal{M}_f$ as the pseudo-Riemannian manifold $(M,g)$ built from $f$.  It has balanced signature $(p,p)$.
\end{definition}

Let $\mathfrak{M} = (V, (\cdot, \cdot), R_\varphi)$ be the model space with $$V = \mathbb{R}^{2p} = {\rm span}\{e_1, \ldots, e_p, f_1, \ldots, f_p\},$$ the metric has only the nonzero entries $(e_i, f_i) = (f_i, e_i) = 1$, and $\varphi \in S^2(V)$ with $\ker \varphi = \ker R_\varphi = {\rm span}\{f_1, \ldots, f_p\}$.  The following is known about this family, see \cite{D00, DG05, DGS04, G07} for details:

\begin{theorem}  \label{ppalreadyknown}
Let $\mathcal{M}_f$ and $\mathfrak{M}$ be defined as above, and let $p \geq 3$. Assume that the Hessian $H$ of $f$ has rank $p$ and has constant signature.  Let $\varphi \in S^2(V)$ have the same (constant) signature as $H$. 
\begin{enumerate}
\item  The curvature tensor $R$ and its covariant derivative $\nabla R$ of $\mathcal{M}_f$ satisfies 
\begin{enumerate}
\item  $R = R_H$.
\item  $\nabla R(Z_1, Z_2, Z_3, Z_4; Z_5) = Z_5(R(Z_1, Z_2, Z_3, Z_4))$.
\item  $\ker R = \ker \nabla R = {\rm span}\{\partial_{y_1}, \ldots, \partial_{y_p}\}.$
\end{enumerate}
\item  $\mathcal{M}_f$ is curvature homogeneous with model $\mathfrak{M}$.
\item  $\mathcal{M}_f$ is a generalized plane wave manifold.  Thus, all Weyl scalar invariants vanish, and $\mathcal{M}_f$ is complete.
\item  Suppose $H$ and $\varphi$ have signature $(r,s)$.  If $f = \frac12(-x_1^2 - \cdots - x_r^2 + x_{r+1}^2 + \cdots + x_{r+s}^2)$, then $\mathcal{M}_f$ satisfies $\nabla R = 0$; that is, $\mathcal{M}_f$ is symmetric and hence locally homogeneous.  Thus, $\mathfrak{M}$ is the model of a symmetric space.
\item  For generic choices of $f$, $\mathcal{M}_f$ is \emph{not} locally homogeneous.
\end{enumerate}
\end{theorem}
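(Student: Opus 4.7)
The plan is to proceed by direct computation of the curvature quantities, then exploit the block structure to pass to the model space statements. For Part~(1), in the ordered frame $(\partial_{x_1}, \ldots, \partial_{x_p}, \partial_{y_1}, \ldots, \partial_{y_p})$ the metric matrix has upper-left block $F_{ij}=f_{x_i}f_{x_j}$, identity off-diagonal blocks, and zero lower-right block, so the inverse metric has zero upper-left block, identity off-diagonals, and $-F$ in the lower-right. Since no component of $g$ depends on a $y$-coordinate, the only nonvanishing partials of $g$ are $\partial_{x_k}g(\partial_{x_i},\partial_{x_j})$. A short Christoffel computation shows that $\nabla_Z\partial_{x_i}$ and $\nabla_Z\partial_{y_j}$ always lie in $\mathrm{span}\{\partial_{y_1},\ldots,\partial_{y_p}\}$ for any coordinate field $Z$. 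Plugging back into the curvature formula, the only nonzero components are $R(\partial_{x_i},\partial_{x_j},\partial_{x_k},\partial_{x_\ell}) = H_{i\ell}H_{jk} - H_{ik}H_{j\ell}$, where $H_{ij}$ denotes the Hessian entries; this is precisely $R=R_H$, giving~(1a). Because $R$ vanishes on any input from $\mathrm{span}\{\partial_{y_j}\}$ while the Christoffel corrections $\nabla_{Z_5}Z_i$ land in that very subspace, the connection terms in the tensor-derivation formula for $\nabla R$ drop out, yielding $\nabla R(Z_1,\ldots,Z_4;Z_5) = Z_5(R(Z_1,\ldots,Z_4))$, which is~(1b); Part~(1c) is then immediate from $H(\partial_{y_i},\cdot)=0$ combined with~(1b).

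For Part~(2), since $H$ has constant rank $p$ and constant signature, I would apply Gram--Schmidt to $\{\partial_{x_i}\}$ against the bilinear form $H$ to produce smooth vector fields $E_1,\ldots, E_p$ which are $H$-orthonormal at every point; the fields $F_j\in\mathrm{span}\{\partial_{y_k}\}$ uniquely determined by $g(E_i,F_j)=\delta_{ij}$ are then also smooth, and the frame $\{E_i,F_j\}$ realizes the model $\mathfrak{M}$ pointwise. Part~(3) rests on the fact that $\mathrm{span}\{\partial_{y_i}\}$ is a parallel null distribution and that all nonvanishing Christoffel symbols have a strict triangular form relative to the splitting $\mathrm{span}\{\partial_{x_i}\}\oplus\mathrm{span}\{\partial_{y_j}\}$: the triangular structure lets one solve the geodesic ODEs by successive quadrature, giving completeness, and forces any complete contraction yielding a scalar Weyl invariant to pair a curvature factor with a kernel direction, whence the result must vanish.

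Part~(4) is immediate from~(1a)--(1b): if $f$ is the indicated quadratic then $H$ is constant, so $Z_5(R_H)=0$ and hence $\nabla R = 0$, making $\mathcal{M}_f$ symmetric and thus locally homogeneous. For Part~(5), iterating~(1b) shows that $\nabla^k R$ at $P$ is assembled from the degree $(k+2)$ partial derivatives of $f$. Using the explicit description of the structure group $G_{R_\varphi}$ from Theorem~\ref{Gphi}, one extracts from $\nabla^k R$ a scalar quantity invariant under the model structure group (for example, an eigenvalue of a self-adjoint endomorphism built from $\nabla R$ by using a nondegenerate form dual to $\varphi$ to raise indices off the nonkernel subspace). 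The main obstacle I anticipate is verifying that, for generic choices of $f$, this scalar is genuinely nonconstant on $M$; this is a transversality argument showing that the polynomial identities among higher partials of $f$ that would be forced by local homogeneity carve out a meager subset of the space of admissible functions~$f$.
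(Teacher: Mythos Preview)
The paper does not actually prove Theorem~\ref{ppalreadyknown}; it is quoted from the literature with the remark ``The following is known about this family, see \cite{D00, DG05, DGS04, G07} for details.'' Your direct computation for Parts~(1)--(4) is the standard argument from those references and is correct in outline: the block form of $g$ and $g^{-1}$ forces all $x$-Christoffel components to vanish, covariant derivatives land in the null distribution $\mathrm{span}\{\partial_{y_j}\}$, and the stated curvature identities follow. Nothing there needs correction.

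Where your proposal diverges from what the paper does supply is Part~(5). The paper sketches this assertion explicitly by constructing the invariant $\alpha_f$: one passes to $\bar V = T_PM/\ker R$, uses Theorem~\ref{Gphi} to identify the structure group of $(\bar V,\bar R)$ as $\{A: A^*\bar\varphi = \pm\bar\varphi\}$, and then takes $\alpha_f$ to be the \emph{absolute value of the squared $\bar\varphi$-norm of $\overline{\nabla R}$}. The absolute value is exactly what handles the $\pm\bar\varphi$ ambiguity in the balanced-signature case. This is a concrete scalar that one can compute in coordinates and check is nonconstant for generic $f$.

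Your proposed route---extracting an eigenvalue of a self-adjoint endomorphism built from $\nabla R$, followed by a transversality argument---is in the right spirit but has a genuine gap. You have not specified which endomorphism, nor verified that its eigenvalues are invariant under $A^*\bar\varphi = -\bar\varphi$ (self-adjointness with respect to $\bar\varphi$ need not survive the sign flip without further care). More seriously, the ``transversality argument'' showing nonconstancy for generic $f$ is left as a black box; the advantage of the paper's $\alpha_f$ is that it reduces this to an explicit polynomial in the third partials of $f$, whose nonconstancy is then elementary. I would recommend replacing your eigenvalue proposal with the squared-norm construction, which closes the argument cleanly.
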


The following is a construction of an isometry invariant $\alpha_f$ of $\mathcal{M}_f$ that proves Assertion (5) of Theorem \ref{ppalreadyknown}; this invariant was originally constructed in \cite{DG05}.  We use the language and results of this paper to rephrase this construction in an effort to keep the paper self-contained.   Define the model space $\bar{\mathfrak{M}}_P = (\bar V,  \bar R, \bar{A})$, where the elements in this model space are as follows:

\begin{tabular}{c r c l}
&$\bar V$ & $=$ & $T_PM/(\ker R|_P)$,  with $\pi: T_PM \to \bar V$ the natural \\
 & & & projection. \\
$R|_P =$& $(R_H)|_P$ &$=$& $\pi^*\bar R$. \\
& $\nabla R|_P$ &$=$& $\pi^*\bar A$.
\end{tabular} 

\noindent One notices that $\bar R = R_{\bar \varphi}$, where $H = \pi^*\bar\varphi$, and according to Theorem \ref{Gphi}, the structure group of the model space $(\bar V, \bar R)$ is the group of linear transformations $A$ that have $A^* \bar\varphi = \pm \bar\varphi$ or $A^*\bar\varphi = \bar\varphi$, depending on the signature of $\bar\varphi$.  One then defines $\alpha_f$ as the absolute value of the square length of $\bar A$ with respect to $\bar\varphi$.  Specifically, if $\{X_1, \ldots, X_p\}$ is a basis for $\bar V$ that is orthonormal with respect to $\bar\varphi$, then 
$$
\alpha_f :=  $$  $$ \left|\sum_{ijk\ell n} \bar\varphi(X_i, X_i) \bar\varphi(X_j, X_j)\bar\varphi(X_k, X_k)\bar\varphi(X_\ell, X_\ell)\bar\varphi(X_n, X_n)\bar A(X_i, X_j, X_k, X_\ell; X_n)^2 \right|.
$$
Since any change of orthonormal basis with respect to $\bar\varphi$ preserves $\alpha_f$, it is invariant under the action of the structure group of $(\bar V, \bar R)$ (this is the purpose of the absolute value above, in the event that $A^* \bar\varphi = -\bar\varphi$), and hence an invariant of $\bar{\mathfrak{M}}_P$.  Since $\alpha_f$ is built from quantities that are preserved by isometry, $\alpha_f$ is an isometry invariant.  One may show that for generic $f$, the invariant $\alpha_f$ is not constant, and hence in this case $\mathcal{M}_f$ is not locally homogeneous.  In particular, $\alpha_f$ is an invariant of $\mathcal{M}_f$ which is \emph{not} of Weyl type, otherwise it would vanish according to Assertion (3) of Theorem \ref{ppalreadyknown}.  See \cite{DG05} or \cite{G07} for more on this invariant.  A similar invariant is constructed in the signature $(2,2)$ case, see or \cite{D00}, or \cite{DGS04} for somewhat different approaches to constructing this invariant--both rely on an understanding of the structure group of $(V, R_\varphi)$ for their construction.

We conclude this section and construct an example that realizes the model space in Corollary \ref{dimensions}, and illustrates the use of Theorem \ref{invariants} in the geometric setting.

\begin{theorem} \label{ppexample}
Define $\mathcal{M} = \times_{i = 1}^s \mathcal{M}_{f_i}$ as the orthogonal direct product of the manifolds $\mathcal{M}_{f_i}$.  Let $\alpha_{f_i}$ be the isometry invariant of $\mathcal{M}_i$ constructed above.  Then any symmetric function of the $\alpha_{f_i}$ is an isometry invariant of $\mathcal{M}$.  This isometry invariant is \emph{not} of Weyl type.
\end{theorem}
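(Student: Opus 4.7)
The plan is to reduce the assertion to a direct application of Theorem \ref{invariants}, after verifying that $\mathcal{M}$ realizes the decomposable model space structure required by that theorem at each point.

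First I would show that the model space of $\mathcal{M}$ at a point $P = (P_1,\ldots,P_s)$ decomposes as $\mathfrak{M}_P = \oplus_{i=1}^{s}(T_{P_i}\mathcal{M}_{f_i}, g_i|_{P_i}, R_i|_{P_i}, \nabla R_i|_{P_i})$. Because $\mathcal{M}$ is an orthogonal direct product, the tangent space splits orthogonally, the Levi-Civita connection of $g$ restricted to each factor agrees with that of $g_i$, and cross terms in the curvature tensor and its covariant derivatives vanish. Combined with Theorem \ref{ppalreadyknown} (Assertion 1(a)), which identifies $R_i|_{P_i} = R_{H_i|_{P_i}}$, this gives $R|_P = \oplus_{i=1}^s R_{H_i|_{P_i}}$, with each form $H_i|_{P_i}$ nondegenerate of constant signature. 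Passing to the quotient by $\ker R|_P$ (which by Assertion 1(c) and Proposition \ref{kerbias} splits as $\oplus_i \ker R_i|_{P_i}$), we obtain the reduced model space $\bar{\mathfrak{M}}_P = \oplus_{i=1}^s(\bar V_i, R_{\bar\varphi_i}, \bar A_i)$ where $\bar A_i$ encodes $\nabla R_i|_{P_i}$ as in the construction of $\alpha_{f_i}$. This is precisely the setup of Theorem \ref{invariants}.

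Next, by the construction recalled before the statement of Theorem \ref{ppexample}, each $\alpha_{f_i}$ is an invariant of the model $(\bar V_i, R_{\bar\varphi_i}, \bar A_i)$. Theorem \ref{invariants} therefore implies that every symmetric function $F(\alpha_{f_1},\ldots,\alpha_{f_s})$ is an invariant of $\bar{\mathfrak{M}}_P$, and hence, being built from quantities that transform naturally under isometries (isometries of $\mathcal{M}$ send model spaces at $P$ isomorphically to model spaces at $\psi(P)$, and symmetric functions are insensitive to the permutation of factors provided by Theorem \ref{multiphi}), an isometry invariant of $\mathcal{M}$.

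Finally I need to verify that this invariant is not of Weyl type. For this I would observe that the orthogonal product of generalized plane wave manifolds is itself a generalized plane wave manifold, so by the argument cited in Assertion (3) of Theorem \ref{ppalreadyknown}, all Weyl scalar invariants of $\mathcal{M}$ vanish identically. On the other hand, for generic choices of the $f_i$, each $\alpha_{f_i}$ is a nonconstant function on $\mathcal{M}_{f_i}$, so e.g.\ $F = \sum_i \alpha_{f_i}$ or $F = \prod_i \alpha_{f_i}$ is nonconstant and in particular nonzero, giving the desired non-Weyl-type invariant.

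The main obstacle I expect is the bookkeeping in the first step: one must be careful that the permutation $\sigma \in S_s$ produced by Theorem \ref{multiphi} for an element of the structure group may genuinely permute the factors (since the invariant forms $\bar\varphi_i$ could coincide up to sign for different $i$), and this is exactly the reason a \emph{symmetric} function is required rather than the tuple $(\alpha_{f_1},\ldots,\alpha_{f_s})$ itself. Verifying compatibility of the pullback relation $B_i^* R_{\bar\varphi_{\sigma(i)}} = R_{\bar\varphi_i}$ from Corollary \ref{dimensions}(3) with the construction of $\alpha_{f_i}$, so that $\alpha_{f_{\sigma(i)}}(g|_{V_i}\cdot\beta_i) = \alpha_{f_i}(\beta_i)$, is the essential point and is precisely what Theorem \ref{invariants} handles.
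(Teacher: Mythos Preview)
Your proposal is correct and follows essentially the same approach as the paper's own proof, which simply invokes Theorem \ref{invariants} together with the fact that $\mathcal{M}$ is a generalized plane wave manifold (so Weyl invariants vanish) while the $\alpha_{f_i}$ do not for generic $f_i$. You have spelled out in more detail the reduction to the kernel-free setting required by Theorem \ref{multiphi}, which the paper leaves implicit.
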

\begin{proof}
This theorem follows directly from the fact that the $\alpha_{f_i}$ are invariants, and by Theorem \ref{invariants}.  That $\mathcal{M}$ is constructed to be a generalized plane wave manifold, any Weyl-type invariant vanishes, and for generic $f_i$, the $\alpha_{f_i}$ do not \cite{DG05}.
\end{proof}
\begin{remark}  {\rm 
Provided each $f_i$ satisfies the conditions of Theorem \ref{ppalreadyknown}, it is immediate that the space $\mathcal{M}$ satisfies Assertions (2)--(5) of that Theorem, except in Assertion (2), where the model space considered is of the form found on Corollary \ref{dimensions}.  In this example there is, however, a nontrivial kernal of the curvature tensor.  }
\end{remark}
\begin{remark}  {\rm 
The construction given in Theorem \ref{ppexample} gives a new set of algebraic curvature tensors that are geometrically realizable as the curvature tensor of a non-homogeneous curvature homogeneous space; the search for these of interest since not every algebraic curvature tensor satisfies this (see \cite{KP94}).
 }  \end{remark}

\section{Summary of results}  \label{section6}

This paper aims to understand the structure group of the indecomposable models space of the form $(V, R_{\varphi})$, or of the decomposable model space $(V, R)$, where $R = \oplus_{i = 1}^k R_{\varphi_i}$.  We also aim to apply these results to a geometrical situation of use. According to the discussion preceding the proof of Theorem \ref{Gphi}, we answer these questions by considering the situation where $\ker R = 0$, which, for the situations considered here, amount to assuming that the symmetric forms involved are all nondegenerate.    

Section \ref{section2} computes the structure group $G_{R_{\varphi}}$ as $G_{\varphi}$ unless $\varphi$ has rank 2, or is of balanced signature.  Section \ref{section3} contains our main result:  to each element of the structure group $G_R$ for $R = \oplus_{i = 1}^k R_{\varphi_i}$, there is a permutation $\sigma$ with $A: V_i \to V_{\sigma(i)}$.  We apply these results to various situations of broad interest in Section \ref{section4} that help us to classify, up to group isomorphism, the structure group $G_R$ when $R\in \mathcal{A}(V)$ is the direct sum of canonical algebraic curvature tensors.  We show  that if there are ever two subspaces of $V$ which are invariant by the action of the structure group, then the structure group itself decomposes as an internal direct product.  In this case, the decomposition of the model space gives rise to a decomposition of the structure group.  Such a situation arises if, for example if $V = \oplus_{i = 1}^kV_i$, the subspaces $V_i$  have different dimensions.  In the event the subspaces $V_i$ have the same dimension and the forms $\varphi_i$ all have the same signature (or reversed signature), then the structure group can be recovered entirely from this data as the wreath product of $G_{R_{\varphi}}$ (which has been computed already in Section \ref{section2}), by the full symmetric group $S_k$.   We also show that in the event the subspaces $V_i$ and $V_j$ share the same dimension but $\varphi_i$ and $\varphi_j$ have incompatible signatures, then any element of the structure group must \emph{not} permute $V_i$ to $V_j$.  We close our study by noting that combinations of these results are also possible, and these combinations allow one to determine the (group) isomorphism class of $G_R$.  We finish our study by describing how one would apply these results in two geometrical settings.

\section*{Acknowledgments} The authors would like to thank B. Lim, Z. Hasan, S. Dunn, R. Trapp, J. Sutliff-Sanders, and S. Zwicknagl for helpful conversations while this research was conducted.  This research was jointly funded by the NSF grants DMS-0850959 and DMS-1156608, and California State University, San Bernardino.

\end{document}